\renewcommand*{\eqref}[1]{%
	\hyperref[{#1}]{\textup{\tagform@{\ref*{#1}}}}%
}
\newtheorem{theorem}{Theorem}
\newtheorem{lemma}[equation]{Lemma}
\newtheorem{proposition}[equation]{Proposition}
\newtheorem{corollary}[equation]{Corollary}
\newtheorem{claim}[equation]{Claim}
\theoremstyle{definition}
\newcommand{\theoremname}{testing}
\theoremstyle{remark}
\newtheorem*{remark*}{Remark}
\numberwithin{equation}{section}
\renewcommand{\phi}{\varphi}
\newcommand{\RomanNumeralCaps}[1]
{\MakeUppercase{\romannumeral #1}}
\newcommand{\Id}{\mbox{\rm Id}}
\newcommand{\e}{\varepsilon}
\newcommand{\al}{\alpha}
\newcommand{\eps}{\varepsilon}
\newcommand{\cA}{\mathcal A}
\newcommand{\cM}{\mathcal M}
\newcommand{\cG}{\mathcal G}
\newcommand{\cO}{\mathcal O}
\newcommand{\bE}{\mathbb E}
\newcommand*{\dprime}{{\prime \prime}}
\newcommand*{\avg}[1]{\left\langle {#1} \right\rangle}
\newcommand*{\wbar}[1]{\overline{#1}}
\newcommand*{\what}[1]{\widehat{#1}}
\newcommand{\norm}[1]{\left\lVert#1\right\rVert}
\newcommand{\im}{\rm i}
\def\re#1{\mathrm{Re}\,#1}
\def\im#1{\mathrm{Im}\,#1}
\renewcommand{\le}{\leqslant}
\renewcommand{\ge}{\geqslant}
\newcommand{\bR}{\mathbb R}
\newcommand{\bC}{\mathbb C}
\newcommand{\bZ}{\mathbb Z}
\newcommand{\bT}{\mathbb T}
\newcommand{\bN}{\mathbb N}
\newcommand{\bP}{\mathbb P}
\newcommand{\ti}{\widetilde}
\title[Minimum modulus of Gaussian polynomials]{The minimum modulus of Gaussian \\ trigonometric polynomials}
\dedicatory{Dedicated to the memory of Thomas Liggett, 1944--2020}
\author{Oren Yakir}
\address{\tiny{Oren Yakir, School of Mathematics, Tel Aviv University,
	Ramat Aviv 6997801, Israel.}}
\email{oren.yakir@gmail.com}
\author{Ofer Zeitouni}
\address{\tiny{Ofer Zeitouni, Department of Mathematics, Weizmann Institute of Science, Rehovot 76100, Israel.}}
\email{ofer.zeitouni@weizmann.ac.il}
\thanks{This project has received funding from the European Research Council (ERC) under the European Union's Horizon 2020 research and innovation programme (grant agreement No. 692452). O.Y. is supported by ISF Grants 382/15 and 1903/18.}
\date{June 16, 2020.}
\begin{document}
	\maketitle
	\begin{abstract}
		We prove that the minimum of the modulus of a random trigonometric polynomial with Gaussian coefficients, properly normalized, has limiting exponential distribution. 
	\end{abstract}
%	\emph{\footnotesize{Dedicated to the memory of Thomas Liggett, 1944--2020.}}
	\section{Introduction}
	Let $n\ge 1$ and consider the random trigonometric polynomial given as
	\[
	P_n(x):= \frac{1}{\sqrt{2n+1}} \sum_{j=-n}^{n} \zeta_j e^{ijx}
	\] 
	where $i :=\sqrt{-1}$ and $\{\zeta_j\}$ are standard independent complex Gaussian coefficients; that is, the density of the random varible $\zeta_j$ with respect to the Lebesgue measure in the complex plane is $\frac{1}{\pi} e^{-|z|^2}$. We note that with this choice of coefficients,
	the polynomial $P=P_n$ is a mean-zero (complex-valued) stationary Gaussian process on $\bT = \bR/2\pi \bZ$ with covariance kernel given by
	\begin{equation}
	\label{eq:covariance_kernel_of_polynomial}
	r_n(x) := \bE\left[P(0)\overline{P(x)}\right] = \frac{1}{2n+1} \sum_{j=-n}^{n} e^{-ijx} = \frac{\sin\left(\left(n+\frac{1}{2}\right)x\right)}{(2n+1) \sin\left(x/2\right)}.
	\end{equation}
	In this paper we study the random variable
	\begin{equation}
	\label{eq:definition_of_minimum}
	m_n:=\min_{x\in \bT}|P_n(x)|
	\end{equation}
	and its limiting distribution as $n\to\infty$. The main result is the following.
	\begin{theorem}
		\label{thm:limiting_distribution_of_minimum}
		For all $\tau>0$ we have that
		\[
		\lim_{n\to\infty} \bP\left(m_n \ge \frac{\tau}{n}  \right) =  e^{- \lambda \tau}
		\]
		where $\lambda = 2\sqrt{\pi/3}$.
	\end{theorem}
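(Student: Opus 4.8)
The plan is to recast $\{m_n \ge \tau/n\}$ as the emptiness of a point process of ``deep excursions'' of $P_n$ near the origin, and to prove that this point process converges to a Poisson process of intensity $\lambda$. Write $P_n = A_n + i B_n$ with $A_n,B_n$ independent real stationary Gaussian processes on $\bT$, and regard $x\mapsto P_n(x)$ as a closed curve $\gamma_n$ in $\bC\cong\bR^2$; then $m_n$ is the distance from $0$ to $\gamma_n$. Since $A_n$ is a real trigonometric polynomial of degree $n$ it has at most $2n$ zeros, and a.s.\ $B_n$ does not vanish at any of them, so $\gamma_n$ avoids $0$ almost surely. Consequently, for fixed $\tau>0$ the number $Z_n=Z_n(\tau)$ of downcrossings of the level $\tau/n$ by the process $x\mapsto|P_n(x)|$ is a.s.\ finite and equals the number of connected components of $\{x\in\bT:|P_n(x)|\le\tau/n\}$, and $\{m_n\ge\tau/n\}=\{Z_n=0\}$. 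It therefore suffices to prove that $Z_n$ converges in distribution to a Poisson random variable with mean $\lambda\tau$.

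The first step is to compute $\bE[Z_n]\to\lambda\tau$ from the Kac--Rice formula for downcrossings. The only local data needed are $r_n(0)=1$, $r_n'(0)=0$ and $-r_n''(0)=\tfrac{n(n+1)}{3}\sim n^2/3$. Thus $P_n(x)$ is a standard complex Gaussian, so $|P_n(x)|^2$ is $\mathrm{Exp}(1)$ and $|P_n(x)|$ has density $(1+o(1))\,2\tau/n$ at the point $\tau/n$; and $P_n'(x)$ is a centered complex Gaussian with $\bE|P_n'(x)|^2\sim n^2/3$, independent of $P_n(x)$ since $r_n'(0)=0$. Writing $P_n(x)=R e^{i\theta}$ one has $\tfrac{d}{dx}|P_n(x)|=\mathrm{Re}\big(P_n'(x)e^{-i\theta}\big)$, which conditionally on $|P_n(x)|=\tau/n$ is a centered real Gaussian of variance $\sim n^2/6$, independent of the phase $\theta$; hence $\bE\big[(\tfrac{d}{dx}|P_n(x)|)^{-}\,\big|\,|P_n(x)|=\tau/n\big]=\tfrac{1}{2}\sqrt{2/\pi}\,(n/\sqrt6)(1+o(1))$. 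Feeding these into the Kac--Rice integral over $x\in\bT$ and using stationarity gives
\[
\bE[Z_n]=(1+o(1))\cdot 2\pi\cdot\tfrac{1}{2}\sqrt{\tfrac{1}{3\pi}}\,n\cdot\tfrac{2\tau}{n}=(1+o(1))\cdot 2\tau\sqrt{\tfrac{\pi}{3}}\;\longrightarrow\;\lambda\tau .
\]
(The same local data show that $P_n(\cdot/n)$ converges to the stationary Gaussian process with covariance $\mathrm{sinc}$, which explains why the answer is universal; the proof itself uses only $r_n$.)

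To upgrade this to a Poisson limit I would use the method of factorial moments, proving that $\bE\big[Z_n(Z_n-1)\cdots(Z_n-k+1)\big]\to(\lambda\tau)^k$ for every $k\ge1$. By the multivariate Kac--Rice formula the $k$-th factorial moment of $Z_n$ is a $k$-fold integral over ordered tuples $(x_1,\dots,x_k)\in\bT^k$ of the conditional expectation of $\prod_i(\tfrac{d}{dx}|P_n(x_i)|)^{-}$ weighted by the joint density of $(|P_n(x_1)|,\dots,|P_n(x_k)|)$ evaluated at $(\tau/n,\dots,\tau/n)$. The key input is the covariance decay $|r_n(x)|\le\min\{1,\ C/(n|x|)\}$ on $\bT$, which yields asymptotic independence of the coordinates once $\min_{i\ne j}|x_i-x_j|\gg1/n$. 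On the ``separated'' region $\min_{i\ne j}|x_i-x_j|\ge\omega_n/n$ (with $\omega_n\to\infty$ slowly) the $k$-point integrand factorizes up to $1+o(1)$ into $k$ copies of the one-point integrand, so this region contributes $(1+o(1))\,\bE[Z_n]^k\to(\lambda\tau)^k$. On the complementary ``clustered'' region one must show the contribution is $o(1)$: this reduces to an upper bound on the conditional Gaussian densities near the diagonal together with the fact that, after the $\tau/n$ rescaling, near-diagonal configurations carry vanishing mass; equivalently one shows that in any arc of length $\omega_n/n$ the process $|P_n|$ has two or more downcrossings of $\tau/n$ only with probability $o(1/n)$, which follows because $|P_n'|=O(n)$ forces $\gamma_n$ to be nearly a straight segment on that scale. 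A Chen--Stein argument on a sufficiently fine discretization of $\bT$ is an alternative route and leads to the same local estimate.

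The main obstacle is exactly this near-diagonal (clustering) analysis: controlling the $k$-point Kac--Rice integrand when several of the $x_i$ lie within $O(1/n)$ of each other, and ruling out multiple downcrossings inside a single short excursion. Everything else is either the elementary first-moment computation above, the standard covariance-decay bound for $r_n$, or routine Gaussian-conditioning bookkeeping showing that well-separated points decouple. Once the clustered region is shown to contribute $o(1)$, the factorial moments of $Z_n$ converge to those of $\mathrm{Poisson}(\lambda\tau)$, so $Z_n\Rightarrow\mathrm{Poisson}(\lambda\tau)$ and $\bP(m_n\ge\tau/n)=\bP(Z_n=0)\to e^{-\lambda\tau}$, as claimed.
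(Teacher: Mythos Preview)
Your route is genuinely different from the paper's. You attack the Poisson limit by computing factorial moments of the downcrossing count via Kac--Rice, whereas the paper never touches higher moments: it builds a point process $\cM_n$ of near-minima on a net of mesh $n^{-2+\eps}$, proves its intensity converges (your first-moment computation matches their Corollary~\ref{cor:limit_intensity_of_point_process}), and then extracts the Poisson limit by a \emph{soft} invariance argument. Namely, they write $P_n\stackrel{\mathrm{law}}{=}\sqrt{1-n^{-2}}\,P_n+n^{-1}Q_n$ with $Q_n$ an independent copy, observe that this adds nearly i.i.d.\ $\mathcal N(0,1/2)$ shifts to the points of $\cM_n$, and invoke Liggett's theorem that a point process invariant under i.i.d.\ Gaussian displacements must be Poisson with translation-invariant intensity (the Choquet--Deny equation forces the intensity to be Lebesgue). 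This buys them the Poisson conclusion without ever bounding a $k$-point Kac--Rice density near the diagonal; the price is that the argument leans on Gaussianity of the coefficients in an essential way, while your moment approach is in principle more robust.

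That said, the step you flag as the ``main obstacle'' really is one, and your heuristic for it is not quite right. You write that $|P_n'|=O(n)$ forces the curve to be nearly a straight segment on an arc of length $\omega_n/n$, but a first-derivative bound alone does not give straightness; it only bounds arc length. To rule out two downcrossings in such an arc you need to control the deviation from the linear interpolant, which is governed by $\sup|P_n''|$. With $\sup|P_n''|=O(n^{2+\beta})$ (this is exactly the paper's Lemma~\ref{lemma:second_derivative_is_small}) the deviation over an arc of length $\ell$ is $O(\ell^2 n^{2+\beta})$, and this is $\ll 1/n$ only when $\ell\ll n^{-3/2}$, far shorter than $\omega_n/n$ with $\omega_n\to\infty$. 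So the ``no clustering on scale $\omega_n/n$'' statement cannot come from a deterministic linear approximation; it has to be a probabilistic repulsion estimate for the two-point (and $k$-point) downcrossing intensity when $|x_i-x_j|\asymp s/n$ with $s$ bounded. The paper carries out precisely this kind of estimate in Claims~\ref{claim:first_sum_in_seperatin_lemma_is_small}--\ref{claim:second_sum_in_seperatin_lemma_is_small}: for very short separations they use the $P_n''$ bound on a much finer mesh, and for moderate separations $n^{-1-2\eps}\le|x_\alpha|\le n^{-\eps}$ they compute the $4\times4$ covariance of $(P,P')$ at two points and show its determinant vanishes like $(n|x_\alpha|)^2$, which gives an integrable density blow-up. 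Your program would need the analogue of this computation inside the multivariate Kac--Rice integral; it is doable, but it is the actual content of the proof, not a routine step.
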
 
	\subsection{Background}
	The study of random polynomials (and in particular, their zeros) has a long history. Consider the \emph{Kac polynomial}
	\[
	F(z):= \sum_{j=0}^{n} \eta_j z^j,\quad z\in \bC,
	\]
	where $\{\eta_j\}$ is an i.i.d. sequence of complex random variables. It is well known that if $\bE\log\left(1+ |\eta_0|\right)<\infty$ then
	  the zeros of $F$ concentrate uniformly around the unit
	  circle as the degree $n$ tends to infinity 
	  \cite{erdos_turan,sparo_sur} (see \cite{hughes_nikeghbali}
	  for a more modern perspective).  For finer
	results  and additional references, see
	\cite{shepp_vanderbei} for the Gaussian coefficients case (i.e. when $\eta_j = \zeta_j$) or	\cite{ibragimov_zeitouni} for the general case.
	
	In view of these results, it is natural to expect that
	the random variable 
	$$m_n(F) := \min_{ |z| = 1} |F(z)| $$ 
	tends to zero as $n\to\infty$, and to study the order of magnitude at which this random variable decay. A particular case of this problem, when the coefficients $\eta_j$ are Rademacher random variables (that is, $\eta_j$ takes the values $\{\pm 1\}$ with equal probability), was posed already by Littlewood in \cite{littlewood}. In \cite{konyagin}, Konyagin proved that in the Rademacher case, for all $\eps>0$
	\[
	\bP\left( m_n(F) \ge \frac{1}{n^{1/2-\eps}} \right) \xrightarrow{n\to\infty } 0.
	\]
	In a later paper, Konyagin and Schlag \cite{konyagin_schlag} proved
	that for either  the Rademacher or  Gaussian cases, there exists some absolute constant $C>0$ such that
	\[
	\limsup_{n\to\infty} \bP\left(m_n(F) < \frac{\eps}{\sqrt{n}}\right) \leq C\eps 
	\] 
	for all $\eps>0$. 
	Note that for the case of complex Gaussian coefficients, $m_n(F)$ is exactly $m_n$ (up to a normalization by $1/\sqrt{n}$) as defined in (\ref{eq:definition_of_minimum}), so Theorem \ref{thm:limiting_distribution_of_minimum} resolves this question for the Gaussian case. The same method of proof works
(after some minor modifications) in the case of real Gaussian coefficients, see Section \ref{sec:real_gaussian} for more details.	
	
After the completion of this paper, we learnt \cite{cook_nguyen} 
that N. Cook and H. Nguyen proved a universality result 
for the minimum modulus of random polynomials  with i.i.d. coefficients,
by a comparison method. Their starting point is our Theorem  
\ref{thm:limiting_distribution_of_minimum} for the Gaussian case.
In particular, their work settles  the 
problem for the Rademacher coefficients case.  
	\subsection{Structure of the proof}
		The proof of Theorem
		  \ref{thm:limiting_distribution_of_minimum} is based on the
		 observation  that locally (within intervals of length
		much smaller than $1/n$), the polynomial $P_n$ is well
		approximated by its linear interpolation. This observation 
		is a consequence of a-priori bounds on the second 
		derivative, see Lemma \ref{lemma:second_derivative_is_small}.
		In particular, by
		the ``high-school" exercise in Section \ref{subsec-highschool},
		the value and the location of local minima of $P_n$ can be well
		predicted by linear interpolation from an appropriate
		net of points. Crucially,
		this observation also implies that points which
		are candidates for being global minima are well separated, 
		see
		Lemma \ref{lemma:points_in_the_process_are_separated}.
		
		Introduce a net of points $x_\alpha\in \bT$, and set 
		$X_\alpha$ to be
		a signed version
		of $n|P_n(x_\alpha^*)|$,
		%$, 
		where
		$x_\alpha^*$ is the location of the minimum of 
		$P_n(\cdot)$ based
		on linear interpolation from $(P_n(x_\alpha), P_n'(x_\alpha))$.
		Introduce a
	 ``good'' event	$\cA_{\al}$  that is typical for 
		global minima, see (\ref{eq-defA}) for the precise definition.
		The global minimum  $nP_n(\cdot)$ is then well approximated
		by the point closest to $0$ of the
		point process
		$\mathcal{M}_n := \sum_{\al} \delta_{X_\al}
		\mathbbm{1}_{\cA_\al} $.
		Theorem  \ref{thm:limiting_distribution_of_minimum}
		is then a consequence of the fact that $\mathcal{M}_n$ converges to a Poisson point process of intensity $\sqrt{\pi/3}$ 
		(the intensity is computed in Corollary \ref{cor:limit_intensity_of_point_process}). 
		The Poisson convergence, in turn, is based on
		a characterization of Poisson processes due to Liggett
	      \cite{liggett}, and uses
		a technique introduced by Biskup and 
		Louidor in \cite{biskup_louidor}:
		one
		exploits the fact that $P_n$ is a Gaussian process
		and that minima are well separated to deduce 
		an invariance property of $\mathcal{M}_\infty$ with respect
		to additive i.i.d. perturbations of the points $X_\alpha$.
		The details of this argument appear in Section 
		\ref{sec-liggett}.
		
		We remark that most of the above argument does 
		not use the Gaussian nature of the coefficients 
		in any essential way. 
		The only place in the argument where the coefficients of 
		$P_n$ are required to be Gaussian 
		(or rather, have a `not too small' Gaussian component) is in 
		Section \ref{sec-liggett}, where we extract a Poisson limit 
		from Liggett's characterization. 
		In Section \ref{sec-small_gaussian_component} we give a sketch
		of how our result can be generalized to 
		a more general choice of random coefficients, 
		having a small Gaussian component. Of course, this extension
		is covered by the Cook-Nguyen 
		theorem mentioned above, and the sketch just serves to 
		illustrate the flexibility, together with the limitations, of our approach.

	\subsection{A high-school exercise}
\label{subsec-highschool}
	Suppose we are given two (non-zero) planar vectors $A= (a_1,a_2)$ and $B=(b_1,b_2)$. We want to find the distance between the origin and the straight line $\left\{A+tB\mid t\in \bR \right\}$. Set $F(t):= A + tB$ and let $t_{min}$ be defined via the relation
	\[
	|F(t_{min})| = \min_{t\in \bR} |F(t)|.
	\]
	We denote by $\gamma$ the angle between $A$ and $B$. It is evident (see Figure \ref{fig:minimum_of_linear_function}) that
	\[
	\frac{\avg{A,B}}{|B|}  = |A| \cos(\gamma)  = |F(t_{min}) - A| = -t_{min}|B|
	\]
	and so $t_{min} = -\avg{A,B}/|B|^2$. Now, simple algebra yields that $$F(t_{min}) = \frac{a_1b_2 - a_2 b_1}{\sqrt{b_1^2 + b_2^2}} = \frac{\avg{A,B^\perp}}{|B|} $$ 
	where $B^\perp$ is an anti-clockwise rotation of the vector $B$ by $90^\circ$ (see again Figure \ref{fig:minimum_of_linear_function}). In complex notation, by considering $A = a_1 + i a_2$ and $B=b_1 + i b_2$, we have
	\[
	t_{\min} = -\frac{\re{(A\overline{B})}}{|B|^2}, \quad F(t_{\min}) = \frac{\im{(A\overline{B})}}{|B|}. 
	\]
\begin{figure}[H]
	\begin{center}	
		\scalebox{0.25}{\includegraphics{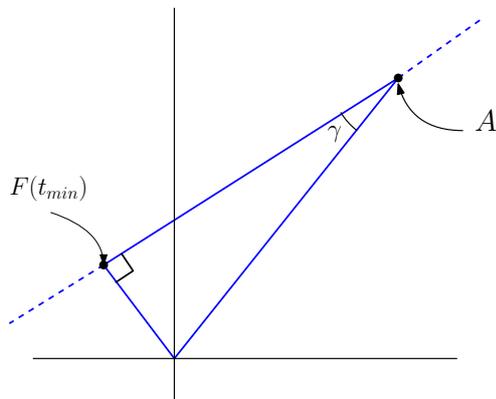}}
		\caption{The dashed line is $\left\{A+tB \mid t\in \bR\right\}$. We see that $\cos(\gamma)|A| = |A-F(t_{min})|$ and that $F(t_{min})$ is the projection of the vector $A$ onto the straight line perpendicular to $B$.}
		\label{fig:minimum_of_linear_function}
	\end{center}
\end{figure}

	\subsection*{Notation}
	We write $f\ll g$ or $f= \cO(g)$ if there exist a constant $C>0$ that does not depend on $n$ such that $f\leq C g$. We will also write $f = o(g)$ if $f/g\to 0$ as $n\to\infty$. We denote by $dm(\cdot)$ the Lebesgue measure on $\bC$, and by
	$C_c(\bR)$  the space of continuous, compactly supported functions
      on $\bR$. We write
    $\mathcal{N}_{\bR}(a,b)$ for the Gaussian law with mean $a$ and variance
    $b$. For random variables $X$ and $Y$, we
    write $X\stackrel{\mbox{\scriptsize{\rm law}}}{=} Y$ if they are identically distributed. For a sequence of random variables $X_n$, we write $X_n\xrightarrow{\ d \ } X$ if
  $X_n$ converges in distribution to $X$ as $n\to\infty$. Finally, for 
  $N\in \bN$ even we write $[N] := \left\{-N/2,-N/2+1,\ldots,N/2-1\right\}$.
  
	\section{Point process of near-minima values}
	\label{sec-point_process_of_near_minima}
	Fix some $\e>0$ small (that will not depend on $n$; $\e=1/100$ is good enough) and set $N:=2\lfloor n^{2-\e} /2\rfloor$ so that
	$N$ is even. We consider $N$ equidistributed points on the unit circle given by
	\[
	x_\al  = \frac{2\pi \al}{N}, \quad \al = -N/2,\ldots,N/2-1.
	\] 
	Denote the interval of length $2\pi/N$ centered at the point $x_\al$ by $I_\al$, namely,
	\[
	\bT = \bigcup_{\al = -N/2}^{N/2-1} I_\al, \quad I_\al = \left[x_\al - \frac{\pi}{N}, x_\al + \frac{\pi}{N}\right].
	\]
	The linear approximation for the polynomial at the point $x_\al$ is
	\begin{equation}
	  \label{eq-Falpha}
      F_\al(x) := P(x_\al) + (x-x_\al) P^\prime(x_\al).
    \end{equation}
    Following the high-school exercise from Section \ref{subsec-highschool}, we set
	\begin{equation*}
		Y_\al := -\frac{\re\left(P(x_\al)\overline{P^\prime(x_\al)}\right)}{|P^\prime(x_\al)|^2}, \qquad Z_\al := n\cdot \frac{\im\left(P(x_\al)\overline{P^\prime(x_\al)}\right)}{|P^\prime(x_\al)|}.
	\end{equation*}
	And so, $Z_\al$ is the minimal modulus (kept with a sign and scaled by $n$) of the linear approximation $F_\al$ and $Y_\al$ is the unique point such that $|F_\al(Y_\al)| = |Z_\al|/n$. The event that the interval $I_\al$ produce a candidate for the minimal value is given by
	\begin{align}
	  \label{eq-defA}
		\cA_\al = &\cA_\al^\prime \cap \cA_\al^\dprime ,\qquad
	\mbox{\rm where} \nonumber \\
		&\cA_\al^\prime = \left\{Y_\al \in I_\al, \ |Z_\al| \leq \log n \right\}  \\ &\cA_\al^\dprime =\left\{|P(x_\al)| \leq n^{-1/2}, \ |P^\prime(x_\al)| \in \left[n^{1-\e/2}, C_0 n\sqrt{\log n} \right]  \right\}.\nonumber
	      \end{align}
	      $C_0$ in the definition above is a large absolute constant which we specify in Lemma \ref{lemma:probability_of_single_point_to_be_near_minima}; $C_0=10$ is good enough. The event $\cA_\al^\prime$ tells us that the interval $I_\al$ gives a candidate for the minimum and the event $\cA_\al^\dprime$ is just the typical values of $\left(P(x_\al),P^\prime(x_\al)\right)$ so that the interval $I_\al$ gives a candidate. We can now define the point process on $\bR$ of near-minima values as
	\begin{equation}
		\label{eq:definition_of_extremal_process}
		\mathcal{M}_n := \sum_{\al=-N/2}^{N/2-1} \delta_{X_\al}, \quad \text{where } \  X_\al = Z_\al \cdot \mathbbm{1}_{\cA_\al} + \infty \cdot {\mathbbm{1}}_{\cA_\al^c}.
	\end{equation} 
	 Here and throughout, we consider $\mathcal{M}_n$ as an element
	 of the space of locally finite, integer valued positive
	 measures on $\bR$, equipped with the local weak$^*$ 
       topology generated by bounded, compactly supported functions. Thus, we never consider the points at infinity that are contributed
     by the events $\cA_\al^c$.
     
     Essentially, the linear approximations captures the global minimum of the polynomial since the second derivative is small. In what follows we make this idea precise. For $\beta>0$, define the event
	 \[
	   \mathcal{G}_{\beta} = \left\{ \sup_{x\in \bT} |P^\dprime_n(x)| \leq n^{2+\beta}\right\}.
	 \]
	 \begin{lemma}
	 	\label{lemma:second_derivative_is_small}
		For any $\beta>0$ we have $\bP(\mathcal{G}_{\beta}^c) \ll \exp(-n^\beta)$.
	 \end{lemma}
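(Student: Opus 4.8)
The plan is to combine a sharp pointwise tail estimate for $P_n''$, which uses its Gaussian structure in an essential way, with a union bound over a net of polynomially many points; the oscillation of $P_n''$ between consecutive net points is controlled by a deterministic (Bernstein-type) inequality, so the supremum over $\bT$ is comparable to the maximum over the net. First I would differentiate: $P_n''(x) = -\frac{1}{\sqrt{2n+1}}\sum_{j=-n}^{n} j^2 \zeta_j e^{ijx}$, which for each fixed $x$ is a centered complex Gaussian with
\[
\sigma_n^2 := \bE|P_n''(x)|^2 = \frac{1}{2n+1}\sum_{j=-n}^{n} j^4 \le n^4 \quad (n \text{ large}),
\]
and whose real and imaginary parts are each real Gaussian of variance $\tfrac12\sigma_n^2$. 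Splitting into real and imaginary parts then gives, for every $x\in\bT$ and every $s>0$, the pointwise bound $\bP(|P_n''(x)| \ge s) \le 4\exp(-s^2/(2\sigma_n^2)) \le 4\exp(-s^2/(2n^4))$.

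Next I would fix the equidistributed net $\mathcal{N} = \{2\pi k/M : 0 \le k < M\}$ with $M := \lceil 2\pi n\rceil$, so that every point of $\bT$ is within $1/(2n)$ of $\mathcal{N}$. Since $P_n''$ is a trigonometric polynomial of degree $n$, Bernstein's inequality gives the deterministic estimate $\sup_{\bT}|P_n'''| \le n\sup_{\bT}|P_n''|$; combined with the net this yields $\sup_{\bT}|P_n''| \le \max_{\mathcal N}|P_n''| + \tfrac{1}{2n}\cdot n\sup_{\bT}|P_n''|$, hence $\sup_{\bT}|P_n''| \le 2\max_{\mathcal N}|P_n''|$. (If one prefers to avoid Bernstein, note that on the overwhelmingly likely event $\{\max_{|j|\le n}|\zeta_j| \le n^{1+\beta}\}$ one has $\sup_{\bT}|P_n'''| \ll n^{7/2+\beta}$ directly from the triangle inequality, so a net of size of order $n^{3}$ achieves the same comparison and keeps the argument self-contained.) A union bound over the $M \ll n$ points of $\mathcal{N}$, applied with $s = \tfrac12 n^{2+\beta}$, then gives
\[
\bP(\mathcal{G}_\beta^c) \le \bP\Bigl(\max_{\mathcal N}|P_n''| > \tfrac12 n^{2+\beta}\Bigr) \ll n\exp\bigl(-n^{2\beta}/8\bigr),
\]
and since $\beta>0$ is fixed we have $n^{2\beta}/8 \ge 2n^{\beta}$ for all $n$ large, so the right-hand side is $\ll \exp(-n^{\beta})$, which is the claim.

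The one genuinely non-routine point is the pointwise tail estimate, and it is worth stressing where it is needed: the naive deterministic bound $|P_n''(x)| \le \frac{1}{\sqrt{2n+1}}\sum_{|j|\le n} j^2|\zeta_j| \ll n^{5/2}\max_{|j|\le n}|\zeta_j|$ is of order $n^{5/2}$ (even after inserting $\max_{|j|\le n}|\zeta_j| \ll \sqrt{\log n}$), which exceeds $n^{2+\beta}$ for every $\beta < 1/2$ and is therefore useless there; one must exploit the cancellation in the Gaussian sum to recover the correct scale $n^2$. Beyond this, the argument is a standard net-plus-union-bound, and I expect the only work to be bookkeeping the constants.
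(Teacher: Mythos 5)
Your argument is correct. It rests on the same two ingredients as the paper's proof -- the Gaussianity of $P_n''(x)$ at a fixed point (pointwise variance of order $n^4$) and Bernstein's inequality for trigonometric polynomials of degree $n$ -- but it converts pointwise control into control of the supremum differently: you discretize $\bT$ into $\cO(n)$ net points, use Bernstein plus the mean value theorem to get $\sup_{\bT}|P_n''|\le 2\max_{\mathcal N}|P_n''|$, and finish with a union bound of Gaussian tails, which in fact yields the stronger bound $\cO\bigl(n\exp(-n^{2\beta}/8)\bigr)\ll\exp(-n^\beta)$. The paper instead avoids any discretization: it normalizes $g=n^{-2}\re{P''}$, uses Bernstein to argue that $|g|\ge\tfrac12\norm{g}_\infty$ on an interval of length $\asymp 1/n$ around the argmax, bounds $\bE\bigl[e^{\theta\norm{g}_\infty/2}\bigr]\ll n e^{\theta^2/5}$ by integrating the pointwise exponential moment over $\bT$, and applies Markov with a fixed $\theta$; this gives $\ll e^{-n^\beta}$ directly with essentially no bookkeeping of net sizes, at the cost of a weaker (but sufficient) exponent. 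Both routes are standard and either suffices for how the lemma is used. One small slip in your optional ``avoid Bernstein'' aside: on the event $\{\max_j|\zeta_j|\le n^{1+\beta}\}$ the triangle inequality gives $\sup_{\bT}|P_n'''|\ll n^{7/2}\max_j|\zeta_j|\ll n^{9/2+\beta}$, not $n^{7/2+\beta}$; your conclusion that a net of size $\cO(n^3)$ still works is nevertheless correct, since the oscillation over a half-mesh is then $\ll n^{3/2+\beta}\ll n^{2+\beta}$, so this does not affect the main argument.
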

	 \begin{proof}
	 	Denote by $g(x) := n^{-2} \re{P^\dprime(x)}$ just for the proof. Then $g$ is a trigonometric polynomial such that for all $x\in \bT$,
		  $g(x)$ is distributed $\mathcal{N}_{\bR} (0,\sigma^2)$ where $\sigma^2 = \frac{1}{10} + o(1)$. Hence, for all $\theta\in \bR$ we have that
	 	\begin{equation}
	 	\label{eq:moment_generating_function_of_second_derivative}
	 	\bE\left[e^{\theta g(x)}\right] = e^{\theta^2/(5+o(1))}.
	 	\end{equation}
	 	Let $\ti{x}$ be the point such that $|g(\ti{x})| = \norm{g}_\infty$, and recall that $\norm{g^\prime}_\infty \leq (2n+1) \norm{g}_\infty$ by Bernstein's inequality. Provided 
		that $|x-\ti{x}|\leq 1/4n$, we have that
	 	\begin{align*}
	 	|g(x)| &\ge |g(\ti{x})| - |g(\ti{x}) - g(x)| \ge \norm{g}_\infty - |\ti{x}-x| \norm{g^\prime}_\infty \geq \frac{1}{2} \norm{g}_\infty.
	 	\end{align*}
		Combining this with (\ref{eq:moment_generating_function_of_second_derivative}) and Fubini, we get the bound 
	 	\begin{align*}
		  \bE\left[e^{\theta \norm{g}_\infty/2}\right] &\leq 2n\bE\left[\int_{|x-\ti{x}| \le 1/4n} \left( e^{\theta g(x)} + e^{-\theta g(x)} \right) dx\right] \\ &\leq 2n \bE\left[\int_{\bT} \big(e^{\theta g(x)} + e^{-\theta g(x)}\big) dx \right] \leq 8\pi n e^{\theta^2/5}.
	 	\end{align*}
	 	Now, we can use the 
		Markov inequality with $\theta = 2\sqrt{2}$ and see that
	 	\begin{align*}
	 	\bP\left(\mathcal{G}_{\beta}^c\right) \leq 2\bP\left(\norm{g}_\infty \ge \frac{n^\beta}{\sqrt{2}}\right) \leq \bE\left[e^{\theta\norm{g}_\infty} \right]e^{-\theta n^{\beta}/\sqrt{2} } \ll e^{-n^\beta}.
	 	\end{align*}
	 \end{proof}
 	We now turn to compute the probability that the interval $I_\al$ contributed a point to $\cM_n$. 
 	\begin{lemma}
 		\label{lemma:probability_of_single_point_to_be_near_minima}
		For any interval $[a,b]\subset \bR$ and for all $\al \in [N]$ we have
 		\begin{equation*}
 			\bP\left(X_\al \in [a,b] \right) = \sqrt{\frac{\pi}{3}}\cdot \frac{b-a}{N} + o\left(\frac{1}{N}\right),
 		\end{equation*} 
 		as $n\to\infty$.
 	\end{lemma}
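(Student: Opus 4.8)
The plan is to reduce to a single point by stationarity, decompose $(P(0),P'(0))$ into independent pieces, and then read off the three elementary probabilities whose product gives the claim. By stationarity of $P_n$ the law of $X_\al$ is the same for every $\al$, so fix $\al=0$; then $x_0=0$, $I_0=[-\pi/N,\pi/N]$, and write $\xi:=P(0)$, $\eta:=P'(0)$. Since the $\zeta_j$ are proper complex Gaussians, $(\xi,\eta)$ is a proper complex Gaussian vector, and differentiating the real even kernel $r_n$ at $0$ gives $\bE[\xi\overline\eta]=r_n'(0)=0$; hence $\xi$ and $\eta$ are independent, with $\bE|\xi|^2=r_n(0)=1$ and $\bE|\eta|^2=-r_n''(0)=\tfrac1{2n+1}\sum_{j=-n}^n j^2=\tfrac{n(n+1)}{3}=:\sigma_n^2$. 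Conditioning on $\eta$ and rotating, put $\xi':=\xi\,\overline\eta/|\eta|$ and write $\xi'=g_1+i\,g_2$; by circular symmetry of $\xi$, the pair $(g_1,g_2)$ is, unconditionally, a pair of i.i.d.\ $\mathcal{N}_{\bR}(0,\tfrac12)$ variables independent of $\eta$, and one gets immediately
\[
Z_0=n\,g_2,\qquad Y_0=-g_1/|\eta|,\qquad |\xi|^2=g_1^2+g_2^2 .
\]

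Next I would note that, once $n$ is large, on the event $\{Z_0\in[a,b]\}\cap\{Y_0\in I_0\}\cap\{|\eta|\in[n^{1-\e/2},C_0 n\sqrt{\log n}]\}$ the two remaining constraints defining $\cA_0$ — namely $|Z_0|\le\log n$ and $|\xi|\le n^{-1/2}$ — hold automatically: there $|g_2|\le\max(|a|,|b|)/n$ and $|g_1|\le\pi|\eta|/N\ll\sqrt{\log n}\,/\,n^{1-\e}$ (recall $N\asymp n^{2-\e}$ and $\e<1/2$), so $|Z_0|\le\log n$ and $g_1^2+g_2^2=o(1/n)$. Hence for large $n$,
\[
\bP(X_0\in[a,b])=\bP\!\Big(g_2\in\big[\tfrac an,\tfrac bn\big]\Big)\;\bE\!\left[\done_{\{|\eta|\in[n^{1-\e/2},\,C_0 n\sqrt{\log n}]\}}\;\bP\!\Big(|g_1|\le\tfrac{\pi|\eta|}{N}\,\Big|\,\eta\Big)\right],
\]
using that $g_2$ is independent of $(g_1,\eta)$ and $g_1$ of $\eta$. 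Both target windows shrink uniformly to a point, and $\mathcal{N}_{\bR}(0,\tfrac12)$ has density $1/\sqrt\pi$ at the origin, so $\bP(g_2\in[\tfrac an,\tfrac bn])=\tfrac{1}{\sqrt\pi}\cdot\tfrac{b-a}{n}(1+o(1))$ and, uniformly over the admissible range of $\eta$, $\bP(|g_1|\le\tfrac{\pi|\eta|}{N}\mid\eta)=\tfrac{2\sqrt\pi\,|\eta|}{N}(1+o(1))$.

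It then remains to evaluate $\bE\bigl[|\eta|\,\done_{\{|\eta|\in[n^{1-\e/2},\,C_0 n\sqrt{\log n}]\}}\bigr]$. Writing $|\eta|=\sigma_n R$ with $R$ the modulus of a standard complex Gaussian (so $R^2\sim\mathrm{Exp}(1)$ and $\bE R=\tfrac{\sqrt\pi}{2}$), the two truncation levels become $\asymp n^{-\e/2}\to0$ and $\asymp\sqrt{\log n}\to\infty$, so dominated convergence gives $\bE\bigl[|\eta|\,\done_{\{\cdots\}}\bigr]=\sigma_n\bigl(\tfrac{\sqrt\pi}{2}+o(1)\bigr)=\tfrac{n}{\sqrt3}\cdot\tfrac{\sqrt\pi}{2}(1+o(1))$. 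Collecting the three factors,
\[
\bP(X_0\in[a,b])=\Big(\tfrac1{\sqrt\pi}\cdot\tfrac{b-a}{n}\Big)\Big(\tfrac{2\sqrt\pi}{N}\Big)\Big(\tfrac{n}{\sqrt3}\cdot\tfrac{\sqrt\pi}{2}\Big)(1+o(1))=\sqrt{\tfrac\pi3}\cdot\tfrac{b-a}{N}+o\!\Big(\tfrac1N\Big).
\]
The genuinely routine steps are the two small-ball estimates and the verification that they are uniform in $\eta$ over its admissible range — harmless, since $\pi|\eta|/N\le\pi C_0\sqrt{\log n}/n^{1-\e}\to0$ uniformly. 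The one point I expect to need real care is the two-sided truncation of $|\eta|$: one must check that the window $[n^{1-\e/2},C_0 n\sqrt{\log n}]$ imposed by $\cA_\al''$, though narrow compared with all of $\bR_+$, is still wide enough at both ends relative to the natural scale $\sigma_n\asymp n$ to capture asymptotically all the mass of the measure $|\eta|\,d\bP$, so that the constant $\tfrac{\sqrt\pi}{2}=\bE R$ is not distorted.
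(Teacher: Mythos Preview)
Your proof is correct and follows essentially the same approach as the paper: reduce by stationarity to $\al=0$, use that $P(0)$ and $P'(0)$ are independent complex Gaussians, exploit rotational invariance to factor the event into independent one-dimensional Gaussian small-ball probabilities, and verify that the constraints from $\cA_\al^{\dprime}$ either are automatic or cost only $o(1/N)$. The only organizational difference is that the paper first computes $\bP(\cA_\al^\prime\cap\{Z_\al\in[a,b]\})$ and then bounds $\bP(\cA_\al^\prime\cap(\cA_\al^{\dprime})^c\cap\{Z_\al\in[a,b]\})$ separately, whereas you observe up front that on $\{Z_0\in[a,b]\}\cap\{Y_0\in I_0\}\cap\{|\eta|\in[n^{1-\e/2},C_0 n\sqrt{\log n}]\}$ the remaining constraints are redundant and then carry the $|\eta|$-truncation through the expectation directly; both routes yield the same computation.
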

 	\begin{proof}
 		The proof is a simple Gaussian computation. By stationarity, we may assume that $x_\al = 0$. Set $\sigma_n:=\sqrt{\frac{n(n+1)}{3}}$, so that
		$\left(P(0),P^\prime(0)/\sigma_n\right)$ are independent standard complex Gaussian random variables. Indeed, we note that
		\begin{align*}
			\bE\left[P(0)\overline{P^\prime(0)}\right] &= \frac{1}{2n+1}\sum_{j=-n}^{n} -ij = 0, \quad \text{and that } \\ 
			\bE\left[|P^\prime(0)|^2\right]& = \frac{1}{2n+1} \sum_{j=-n}^{n} j^2 = \frac{n(n+1)}{3}.
		\end{align*}
		Moreover, a straight forward computation shows that $\bE\left[P(0)^2\right] = \bE\left[P(0)P^\prime(0)\right] = \bE\left[P^\prime(0)^2\right] = 0$. By the bounded convergence theorem
 		\begin{align*}
		  \bP\big(\cA_\al^\prime &\cap \left\{Z_\al \in [a,b] \right\}\big) \\ &= \bP\left(\left|Y_\al\right| \leq \frac{\pi}{N}, Z_\al \in [a,b] \right) \\ &= \int_{\bC} e^{-|w|^2} \bP\left(\frac{\left|\re\left(P(0)\wbar{w}\right)\right|}{|w|^2} \leq \frac{\pi \sigma_n}{N} , \ n\cdot \frac{\im \left(P(0)\wbar{w}\right)}{|w|} \in \left[a,b\right] \right)\frac{dm(w)}{\pi} \\ &= \int_{\bC} e^{-|w|^2} \bP\left(\left|\re\left(P(0)\right)\right| \leq \frac{\pi \sigma_n}{N}|w|\right)\cdot \bP\Big(\im\left(P(0)\right) \in \left[\frac{a}{n},\frac{b}{n}\right]\Big) \frac{dm(w)}{\pi} \\ &= \frac{2\pi \sigma_n}{\pi^2 N} \cdot \frac{b-a}{n} \int_{\bC} e^{-|w|^2}|w| dm(w) + o(N^{-1}) \\ &= \sqrt{\frac{\pi}{3}} \cdot \frac{b-a}{N} + o(N^{-1}),
 		\end{align*}
 		where in the third equality we used the rotational symmetry of the Gaussian distribution. To conclude the lemma, we show that $\bP\left(\cA_{\al}^\prime \cap (\cA_\al^\dprime)^c  \cap \left\{Z_\al \in [a,b]\right\} \right)$ is negligible. Since $|Z_\al|/n = |P(0) + Y_\al P^\prime(0)|$,
		the triangle inequality yields that on the event $\cA_\al^\dprime \cap \{ Z_\al \in [a,b]\},$
 		\begin{equation}
 			\label{eq:bound_on_value_of_poly_assuming_crossing}
 			|P(0)| \leq |Z_\al|/n  + |Y_\al||P^\prime(0)| \ll \frac{1}{n} + C_0\frac{n\sqrt{\log n}}{n^{2-\e}}  \ll \frac{1}{n^{1-2\e}}.
 		\end{equation}
 		From (\ref{eq:bound_on_value_of_poly_assuming_crossing}) we conclude that
 		\[
 		\left\{|P(0)| \ge n^{-1/2},\ Z_\al \in [a,b]\right\} \subset \left\{ |P^\prime(0)|\ge C_0 n\sqrt{\log n} ,\ Z_\al \in [a,b]\right\}
 		\]
 		for all fixed $C_0>0$ and for $n$ large enough. 
		We thus get the upper bound
 		\begin{align*}
 			\bP&\left(\cA_{\al}^\prime \cap (\cA_\al^\dprime)^c \cap \left\{Z_\al \in [a,b]\right\} \right) \\ & \leq \bP\left(\cA_{\al}^\prime \cap \left\{Z_\al \in [a,b]\right\} \cap \left\{|P^\prime(0)| \leq n^{1-\e/2} \right\} \right) + \bP\left(|P^\prime(0)| \ge C_0 n\sqrt{\log n}\right) \\
			&= \cO\left(\frac{1}{n^2}\right).
 		\end{align*}
 		The first probability is $\cO(n^{-2})$ from the 
		same Gaussian computation as done above and 
		the second probability is $o(n^{-2})$ for a large absolute constant $C_0$ because $P'(0)$ is a complex Gaussian variable
		of variance bounded by $n^2$; one sees that $C_0=10$ will do.
		Altogether,
 		\begin{align*}
 			\bP\left(X_\al \in [a,b]\right) &= 
			\bP\left(\left\{Z_\al \in [a,b] \right\} \cap \cA_\al\right)\\
			&=
			\bP\left(\left\{Z_\al \in [a,b] \right\} \cap \cA_\al^\prime \right)
		      - \bP\left(\left\{Z_\al \in [a,b] \right\} \cap \cA_\al^\prime \cap (\cA_\al^\dprime)^c\right) \\ &= \sqrt{\frac{\pi}{3}} \cdot \frac{b-a}{N} + o\left(\frac{1}{N}\right).  
 		\end{align*} 		
 		\end{proof}
 	\begin{corollary}
 		\label{cor:limit_intensity_of_point_process}
		For any interval $[a,b]\subset \bR$ we have, as $n\to\infty$,
 		\[
 		\bE\left[\cM_n([a,b]) \right] = \sqrt{\frac{\pi}{3}} (b-a) + o(1).
 		\]
 	\end{corollary}
 	\begin{corollary}
 		\label{cor:tightness_of_point_process}
		The sequence of point processes $\{ \cM_n \}$ is tight. That is, for any interval $I\subset \bR$ and for all $n\ge n_0$,
 		\[
 		\lim_{K\to\infty} \bP\left(\cM_n(I) \ge K \right) = 0.
 		\]
 	\end{corollary}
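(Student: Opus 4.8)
The plan is to deduce tightness directly from the first-moment estimate of Corollary \ref{cor:limit_intensity_of_point_process} via Markov's inequality. Since $\cM_n(I)$ is a nonnegative integer-valued random variable, for every $K>0$ we have
\[
\bP\left(\cM_n(I) \ge K\right) \le \frac{\bE\left[\cM_n(I)\right]}{K}.
\]
The right-hand side is controlled by Corollary \ref{cor:limit_intensity_of_point_process}: there is $n_0$ such that for all $n\ge n_0$ one has $\bE\left[\cM_n(I)\right] \le \sqrt{\pi/3}\,|I| + 1 =: C_I$, a constant independent of $n$. (Here $|I|$ denotes the length of $I$.) Hence for all $n\ge n_0$,
\[
\bP\left(\cM_n(I) \ge K\right) \le \frac{C_I}{K},
\]
and letting $K\to\infty$ gives the claim, uniformly in $n\ge n_0$.

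There is essentially no obstacle here: the content of the corollary is entirely contained in the uniform-in-$n$ boundedness of the intensity, which Corollary \ref{cor:limit_intensity_of_point_process} already provides, and Markov converts this into the desired decay in $K$. If one wants to avoid invoking the sharp asymptotic, one may equally well observe that $\cM_n(I) = \sum_{\alpha:\, x_\alpha^* \in I} \mathbbm{1}_{\cA_\alpha}$ after noting each interval $I_\alpha$ meeting $I$ contributes at most one point, and then bound the number of relevant $\alpha$ together with the per-point probability $\bP(\cA_\alpha) = \cO(1/N)$ from Lemma \ref{lemma:probability_of_single_point_to_be_near_minima}; but this merely re-derives the first moment bound and the Markov argument above is the cleanest route.

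It is worth remarking that this crude first-moment tightness, combined with Corollary \ref{cor:limit_intensity_of_point_process}, suffices as the input to the Poisson-convergence argument via Liggett's characterization in Section \ref{sec-liggett}: one does not need a matching lower bound on $\cM_n(I)$ at this stage, nor any control on higher factorial moments, since the Liggett/Biskup--Louidor scheme produces the Poisson law from the intensity together with the self-consistency (invariance under i.i.d.\ perturbations) property rather than from a direct moment computation.
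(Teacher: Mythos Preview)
Your argument is correct and is exactly what the paper intends: it states that Corollary \ref{cor:tightness_of_point_process} is an immediate consequence of Lemma \ref{lemma:probability_of_single_point_to_be_near_minima}, and the first-moment bound plus Markov's inequality is precisely how one makes ``immediate'' precise here. The only cosmetic slip is in your parenthetical aside, where you write $\cM_n(I) = \sum_{\alpha:\, x_\alpha^* \in I} \mathbbm{1}_{\cA_\alpha}$; the correct expression is $\cM_n(I) = \sum_{\alpha} \mathbbm{1}_{\{X_\alpha \in I\}}$, but this does not affect the main line of reasoning.
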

 	Both Corollary \ref{cor:limit_intensity_of_point_process} and Corollary \ref{cor:tightness_of_point_process} are immediate consequences of Lemma \ref{lemma:probability_of_single_point_to_be_near_minima}. We turn to prove that the extremal process $\cM_n$ captures the minimum modulus of our polynomial $P_n$.
 	\begin{lemma}
 		\label{lemma:extremal_process_captures_minimum}
 		For all $\tau>0$ we have that
 		\[
 		\lim_{n\to\infty} \left|\bP\left(m_n \ge \frac{\tau}{n}  \right) - \bP\left(\cM_n((-\tau,\tau)) = 0 \right)\right| =0.
 		\]
 	\end{lemma}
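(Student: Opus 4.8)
\emph{Strategy and the linear‑approximation transfer.} I would prove both one‑sided inclusions between $\{m_n<\tau/n\}$ and the events $\{\cM_n((-(\tau\pm\delta),\tau\pm\delta))\ge1\}$ and then let $\delta\to0$. Fix $\beta\in(0,1-2\e)$ and set $\rho_n:=n^{3+\beta}(\pi/N)^2=\cO(n^{-1+2\e+\beta})=o(1)$. Throughout we work on a good event $\cE_n$ with $\bP(\cE_n)\to1$ that contains $\mathcal{G}_\beta$, the event $\{\norm{P_n'}_\infty\le C_0 n\sqrt{\log n}\}$ (probability $\to1$ by a computation as in Lemma~\ref{lemma:second_derivative_is_small}), and two further events introduced below. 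On $\mathcal{G}_\beta$, Taylor's theorem gives $|P_n(x)-F_\al(x)|\le\tfrac12\norm{P_n^\dprime}_\infty(\pi/N)^2\le\rho_n/(2n)$ for all $\al$ and all $x\in I_\al$, whence
\[
\Bigl|\,\min_{x\in I_\al}n|P_n(x)|-\min_{x\in I_\al}n|F_\al(x)|\,\Bigr|\le\rho_n\qquad\text{for every }\al.
\]
Since $\min_{x\in\bR}|F_\al(x)|=|Z_\al|/n$ is attained at $x_\al+Y_\al$, this transfer bound shows that on $\{Y_\al\in I_\al\}\supseteq\cA_\al$ one has $\bigl|\min_{x\in I_\al}n|P_n(x)|-|Z_\al|\bigr|\le\rho_n$, while unconditionally $\min_{x\in I_\al}n|P_n(x)|\ge|Z_\al|-\rho_n$.

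\emph{The easy inclusion.} If $\cM_n((-(\tau-\rho_n),\tau-\rho_n))\ge1$ then some $\al$ satisfies $\cA_\al$ with $|Z_\al|<\tau-\rho_n$, so $m_n\le\min_{x\in I_\al}|P_n(x)|\le(|Z_\al|+\rho_n)/n<\tau/n$. Hence $\{\cM_n((-(\tau-\rho_n),\tau-\rho_n))\ge1\}\cap\mathcal{G}_\beta\subseteq\{m_n<\tau/n\}$, and for fixed $\delta>0$ and $n$ large, $\bP(m_n<\tau/n)\ge\bP(\cM_n((-(\tau-\delta),\tau-\delta))\ge1)-o(1)$.

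\emph{The hard inclusion --- the main obstacle.} Assume $m_n<\tau/n$, attained at $x^*$, and let $\al_0$ satisfy $|x^*-x_{\al_0}|\le\pi/N$, so $x^*\in I_{\al_0}$. The transfer bound and $\norm{P_n'}_\infty\le C_0 n\sqrt{\log n}$ give $|P_n(x_{\al_0})|\le\tau/n+(\pi/N)\norm{P_n'}_\infty=o(n^{-1/2})$, $|P_n'(x_{\al_0})|\le C_0 n\sqrt{\log n}$, and $|Z_{\al_0}|\le n\min_{x\in I_{\al_0}}|P_n(x)|+\rho_n<\tau+\rho_n\ll\log n$; so the only conditions in $\cA_{\al_0}$ not yet verified are the lower bound $|P_n'(x_{\al_0})|\ge n^{1-\e/2}$ and $Y_{\al_0}\in I_{\al_0}$. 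This is the delicate step. If $|P_n'(x_{\al_0})|<n^{1-\e/2}$, then $|F_{\al_0}(x)-P_n(x_{\al_0})|\le(\pi/N)n^{1-\e/2}=o(n^{-1})$ on $I_{\al_0}$, which with the transfer bound forces $|P_n(x_{\al_0})|<2\tau/n$; but $\bigcup_\al\{|P_n(x_\al)|<2\tau/n,\ |P_n'(x_\al)|<n^{1-\e/2}\}$ has probability $\ll N n^{-2}\cdot n^{-\e}=\cO(n^{-2\e})=o(1)$, since $P_n(x_\al)$ and $P_n'(x_\al)$ are independent complex Gaussians of variances $1$ and $\asymp n^2$. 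We put the complement of this event into $\cE_n$, so that on $\cE_n$, $|P_n'(x_{\al_0})|\ge n^{1-\e/2}$. The ``high‑school'' formula, Taylor‑expanded about $x^*$ (using $\re(P_n(x^*)\overline{P_n'(x^*)})=0$ at the minimiser and $\norm{P_n^\dprime}_\infty\le n^{2+\beta}$), then yields $x_{\al_0}+Y_{\al_0}=x^*+\cO(\sqrt{\log n}\,n^{-3+3\e+\beta})$, with error $o(\pi/N)$. To deduce $x_{\al_0}+Y_{\al_0}\in I_{\al_0}$ it remains to ask that $x^*$ lie outside the $\cO(\sqrt{\log n}\,n^{-3+3\e+\beta})$‑neighbourhood of the $N$ cell boundaries; the probability this fails while $m_n<\tau/n$ is $\ll N n^{-2}=\cO(n^{-\e})=o(1)$, again because $|P_n|<2\tau/n$ near a fixed point has probability $\ll n^{-2}$, and we add this complement to $\cE_n$. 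On $\cE_n$ we now have $\cA_{\al_0}$ together with $X_{\al_0}=Z_{\al_0}\in(-(\tau+\rho_n),\tau+\rho_n)$; hence $\{m_n<\tau/n\}\cap\cE_n\subseteq\{\cM_n((-(\tau+\rho_n),\tau+\rho_n))\ge1\}$, and for fixed $\delta>0$ and $n$ large, $\bP(m_n<\tau/n)\le\bP(\cM_n((-(\tau+\delta),\tau+\delta))\ge1)+o(1)$.

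\emph{Conclusion.} Since $\bP(\cM_n((-\tau,\tau))\ge1)$ lies between $\bP(\cM_n((-(\tau-\delta),\tau-\delta))\ge1)$ and $\bP(\cM_n((-(\tau+\delta),\tau+\delta))\ge1)$, the two inclusions give
\[
\bigl|\bP(m_n<\tau/n)-\bP(\cM_n((-\tau,\tau))\ge1)\bigr|\le\bE\bigl[\cM_n([-(\tau+\delta),-(\tau-\delta)])\bigr]+\bE\bigl[\cM_n([\tau-\delta,\tau+\delta])\bigr]+o(1),
\]
which by Corollary~\ref{cor:limit_intensity_of_point_process} tends to $4\sqrt{\pi/3}\,\delta$ as $n\to\infty$; letting $\delta\to0$ gives the claim, since $\bP(m_n\ge\tau/n)=1-\bP(m_n<\tau/n)$ and $\bP(\cM_n((-\tau,\tau))=0)=1-\bP(\cM_n((-\tau,\tau))\ge1)$ as $\cM_n$ is integer valued. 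The only nontrivial point is the hard inclusion, and within it the verification that the typical‑value event $\cA_{\al_0}^\dprime$ --- above all its lower bound $|P_n'(x_{\al_0})|\ge n^{1-\e/2}$ --- cannot fail exactly when $P_n$ is anomalously small on $I_{\al_0}$; the rest is a routine consequence of the transfer bound and the intensity asymptotics.
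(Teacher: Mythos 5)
Your proof is correct in substance and follows the same core strategy as the paper: on the high-probability event of Lemma \ref{lemma:second_derivative_is_small} you replace $P_n$ on each cell $I_\al$ by its linear interpolation $F_\al$, so that $\{m_n\ge\tau/n\}$ and $\{\cM_n((-\tau,\tau))=0\}$ can only disagree when some $|X_\al|$ lands near $\pm\tau$ or when the cell containing the true minimizer fails $\cA_\al$, and you then kill these contributions with the Gaussian estimates behind Lemma \ref{lemma:probability_of_single_point_to_be_near_minima}. The differences are organizational: the paper bounds the per-cell symmetric differences directly by thin-annulus events of the form $\{\tau-cn^{-1+3\e}<|X_\al|\le\tau\}$, obtaining an $O(n^{-1+3\e})$ rate, whereas you widen the window by a fixed $\delta$, invoke Corollary \ref{cor:limit_intensity_of_point_process}, and let $\delta\to0$ --- coarser but equally valid; in return, you are more explicit than the paper in the ``hard'' direction, checking that the minimizing cell actually satisfies all of $\cA_{\al_0}$ (the lower bound $|P_n'(x_{\al_0})|\ge n^{1-\e/2}$ and $Y_{\al_0}\in I_{\al_0}$), a point the paper's one-line containment leaves implicit. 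One arithmetic slip should be fixed: $(\pi/N)\,n^{1-\e/2}\asymp n^{-1+\e/2}$ is not $o(n^{-1})$, so on $\{|P_n'(x_{\al_0})|<n^{1-\e/2}\}$ you cannot conclude $|P_n(x_{\al_0})|<2\tau/n$; the correct conclusion is $|P_n(x_{\al_0})|\ll n^{-1+\e/2}$, which still gives a per-cell probability $\ll n^{-2+\e}\cdot n^{-\e}=n^{-2}$ and a union bound $O(n^{-\e})=o(1)$ (rather than your stated $O(n^{-2\e})$), so the argument goes through after this harmless correction.
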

 	\begin{proof}
 		Clearly,
 		\begin{equation*}
 		\bP\left(m_n \ge \frac{\tau}{n}\right) = \bP\left(\bigcap_{\al} \left\{\min_{x\in I_\al}|P_n(x)| \ge \frac{\tau}{n} \right\}\right).
 		\end{equation*}
		Recall the definition of the linear approximation (\ref{eq-Falpha}). For each $\al$ we use Taylor expansion and see that on the event $\cG_\e$,
 		\[
 		\left|P_n(x) - F_\al(x) \right| \ll \frac{\sup_{x\in \bT}|P^\dprime(x)|}{N^2} \ll \frac{1}{n^{2-3\e}}.
 		\]
		Hence, for large enough $n$, we have the upper bound
 		\begin{align*}
		  &\left|\bP\left(m_n \ge \frac{\tau}{n}\right) - \bP(\cM_n((-\tau,\tau)) = 0)\right|  \\ & \leq \sum_{\al=-N/2}^{N/2-1} \bP\left(\left\{\min_{x\in I_\al}|P_n(x)| \ge \frac{\tau}{n},\ |X_\al| < \tau \right\}\cap \cG_\e 
		  \right) \\ & \qquad + \sum_{\al=-N/2}^{N/2-1} \bP\left(\left\{\min_{x\in I_\al}|P_n(x)| < \frac{\tau}{n},\ |X_\al| \geq \tau \right\}\cap 
		  \cG_\e \right) + 2\bP\left(\mathcal{G}^c_\e\right) \\ &\leq \sum_{\al=-N/2}^{N/2-1} \bP\left( \tau - \frac{1}{2n^{1-3\e}}<|X_\al| \leq \tau\right) + \bP\left( \tau <|X_\al| \leq \tau + \frac{1}{2n^{1-3\e}}\right) + \cO\left(e^{-n^\e}\right). 
 		\end{align*}
 		Following the same computation we did in the proof of Lemma \ref{lemma:probability_of_single_point_to_be_near_minima}, we see that
 		\[
 		\bP\left( \tau - \frac{1}{n^{1-3\e}}<|X_\al| \leq \tau\right) = 2\sqrt{\frac{\pi}{3}}\frac{\tau}{N n^{1-3\e}}(1+o(1))
 		\]
 		and a similar equality for the other term in the sum. Altogether, we see that
 		\[
 		\left|\bP\left(m_n \ge \frac{\tau}{n}\right) - \bP(\cM_n((-\tau,\tau)) = 0)\right| \ll \frac{1}{n^{1-3\e}}
 		\]
 		and we are done.
 	\end{proof}
 	By Lemma \ref{lemma:extremal_process_captures_minimum}, the limit distribution of $m_n$ (Theorem \ref{thm:limiting_distribution_of_minimum}) will follow if we can show that
	$\cM_n$ converge in distribution (as a point process) to a Poisson point process with the desired intensity. This is established in what follows. We first prove that the points in the extremal process come from well separated intervals.
 	\begin{lemma}
 		\label{lemma:points_in_the_process_are_separated}
 		For all $\e>0$ we have that
 		\begin{equation*}
 			\lim_{n\to\infty} \bP\bigg(\bigcup_{\stackrel{\al\not=\al^\prime}{|x_\al - x_\al^\prime|\leq n^{-\e}}} \cA_\al \cap \cA_{\al^\prime} \bigg) = 0.
 		\end{equation*}
 	\end{lemma}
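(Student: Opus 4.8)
The plan is to work on the event $\cG_\e$ (so that $\sup_{\bT}|P''|\le n^{2+\e}$), which by Lemma~\ref{lemma:second_derivative_is_small} fails with probability $\ll e^{-n^\e}$, and then to bound $\bP(\cA_\al\cap\cA_{\al'}\cap\cG_\e)$ for each ordered pair $\al\ne\al'$ with $d:=|x_\al-x_{\al'}|\le n^{-\e}$ and to sum over all such pairs, using the union bound $\bP\bigl(\bigcup_{\al\ne\al'}\cA_\al\cap\cA_{\al'}\bigr)\le\bP(\cG_\e^c)+\sum_{\al\ne\al'}\bP(\cA_\al\cap\cA_{\al'}\cap\cG_\e)$. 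Write $d=2\pi|j|/N$ with $1\le|j|\le Nn^{-\e}/(2\pi)$, and set $x_\al^*:=x_\al+Y_\al$, the location of the minimum of $|F_\al|$ (so that $x_\al^*\in I_\al$ on $\cA_\al$). The basic deterministic facts, all consequences of the high‑school exercise of Section~\ref{subsec-highschool} and Taylor's theorem on $\cG_\e$, are: (i) $|F_\al(x)|^2=|P'(x_\al)|^2(x-x_\al^*)^2+(Z_\al/n)^2$; (ii) $\|P-F_\al\|_{L^\infty(I_\al)}\ll n^{-2+2\e}$; (iii) $F_{\al'}-F_\al$ is affine, with $|F_\al(x)-F_{\al'}(x)|\ll d^2n^{2+\e}$ whenever $|x-x_\al^*|\le 2d$ (using $|F_\al(x_{\al'})-P(x_{\al'})|\le\tfrac12 d^2\sup_{\bT}|P''|$, $|P'(x_\al)-P'(x_{\al'})|\le d\sup_{\bT}|P''|$, and $|x_\al^*-x_{\al'}|\le 2d$).

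\textbf{Small $|j|$.} On $\cA_\al\cap\cA_{\al'}\cap\cG_\e$, facts (i) and (iii) applied at the point $x_{\al'}^*\in I_{\al'}$ give
\[
n^{1-\e/2}\,|x_{\al'}^*-x_\al^*|\ \le\ |P'(x_\al)|\,|x_{\al'}^*-x_\al^*|\ \le\ |F_\al(x_{\al'}^*)|\ \le\ \frac{\log n}{n}+O\!\left(d^2n^{2+\e}\right),
\]
so $|x_{\al'}^*-x_\al^*|\ll \log n\cdot n^{-2+\e/2}+d^2 n^{1+3\e/2}$. When $|j|\ge 2$ the intervals $I_\al,I_{\al'}$ have disjoint interiors, so $|x_{\al'}^*-x_\al^*|\ge d-2\pi/N\ge d/2$; comparing with the previous bound, this is impossible once $d\le c\,n^{-1-3\e/2}$, i.e. whenever $2\le|j|\le c'\,n^{1-5\e/2}$, so for those pairs $\cA_\al\cap\cA_{\al'}\cap\cG_\e=\varnothing$. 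When $|j|=1$, the same bound forces $Y_\al$ into an interval of length $O(\log n\cdot n^{-2+\e/2})$ abutting an endpoint of $I_\al$; conditioning on $P'(x_\al)$ as in the proof of Lemma~\ref{lemma:probability_of_single_point_to_be_near_minima} and using one–dimensional Gaussian anti‑concentration for $Y_\al$ and $Z_\al$ yields $\bP(\cA_\al\cap\cA_{\al+1}\cap\cG_\e)\ll (\log n)^2n^{-2+\e/2}$, and as there are $O(N)$ pairs with $|j|=1$ their total contribution is $\ll(\log n)^2n^{-\e/2}\to 0$.

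\textbf{Large $|j|$.} For $|j|\gtrsim n^{1-5\e/2}$ I use that $V:=(P(x_\al),P(x_{\al'}),P'(x_\al),P'(x_{\al'}))$ is a proper complex Gaussian vector (all pseudo‑covariances vanish, since $\bE[\zeta_j\zeta_k]=0$). Conditionally on $(P'(x_\al),P'(x_{\al'}))=(w,w')$, the law of $(P(x_\al),P(x_{\al'}))$ has density at most $1/(\pi^2\det\Sigma_c)$, where $\Sigma_c$ is the (deterministic) Schur complement; and on $\cA_\al$ the value $P(x_\al)$ lies in the rectangle $\{\,\tfrac{w}{|w|}(u+iv):|u|\le\pi|w|/N,\ |v|\le\log n/n\,\}$ of area $4\pi|w|\log n/(nN)$ (this encodes $Y_\al\in I_\al$ and $|Z_\al|\le\log n$), and likewise for $P(x_{\al'})$. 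Hence
\[
\bP(\cA_\al\cap\cA_{\al'}\mid w,w')\ \le\ \frac{16\,|w|\,|w'|\,(\log n)^2}{n^2N^2\det\Sigma_c},
\]
and taking expectations, using Cauchy--Schwarz and $\bE[|P'(x_\al)|^2]=\sigma_n^2\ll n^2$, gives $\bP(\cA_\al\cap\cA_{\al'})\ll (\log n)^2/(N^2\det\Sigma_c)$. The remaining input is the degeneracy estimate $\min(1,(nd)^6)\ll\det\Sigma_c$: writing $\det\Sigma_c=\det\Cov(V)/\det\Cov(P'(x_\al),P'(x_{\al'}))$ and evaluating these as (confluent‑)Vandermonde‑type Gram determinants of the vectors $(2n+1)^{-1/2}((ij)^ke^{ijx_\al})_{j=-n}^{n}$, $k=0,1,2,3$, and their analogues at $x_{\al'}$, one finds that as $d\to0$ the numerator is of order $d^8 n^{12}$ and the denominator of order $d^2n^6$ — the exponent $6=2\cdot3$ reflecting that the Taylor jets of $P$ of orders $0,1,2$ at $x_\al$ are pinned by $(P(x_\al),P'(x_\al),P'(x_{\al'}))$, leaving $P(x_{\al'})$ free only through its order‑$3$ jet contribution of size $\asymp d^3$ — while $\det\Sigma_c$ is bounded below for $d\gtrsim1/n$. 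Plugging this in and summing, with $J:=N/(2\pi n)\sim n^{1-\e}/(2\pi)$,
\[
\sum_{\al}\ \sum_{n^{1-5\e/2}\lesssim|j|\le Nn^{-\e}/(2\pi)}\frac{(\log n)^2}{N^2\min(1,(|j|/J)^6)}\ \ll\ \frac{(\log n)^2}{N}\bigl(n^{1+13\e/2}+n^{2-2\e}\bigr)\ \ll\ (\log n)^2\bigl(n^{15\e/2-1}+n^{-\e}\bigr),
\]
which tends to $0$ since $0<\e<2/15$ (in particular for $\e=1/100$). Combining the contributions of the three ranges with $\bP(\cG_\e^c)\to0$ proves the lemma.

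\textbf{The main obstacle} is the intermediate range $n^{-1-3\e/2}\lesssim d\lesssim n^{-1}$: there $P(x_\al)\approx P(x_{\al'})$ and $P'(x_\al)\approx P'(x_{\al'})$, so a union bound over the two values $|P(x_\al)|\le n^{-1/2}$, $|P(x_{\al'})|\le n^{-1/2}$ is far too lossy, while the deterministic emptiness argument of the ``small $|j|$'' case has already broken down. Obtaining a summable bound there is precisely what forces us to pin down the sharp degeneracy rate $\det\Sigma_c\asymp(nd)^6$ of the conditional covariance, which is the computational heart of the proof.
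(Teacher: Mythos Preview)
Your overall architecture matches the paper's: split the pairs by distance, show the near pairs are deterministically impossible on a good event controlling $P''$, and handle the far pairs by a Gaussian density bound. The ``small $|j|$'' argument you give is essentially the paper's proof of Claim~\ref{claim:first_sum_in_seperatin_lemma_is_small}, just phrased via $|x_{\al'}^*-x_\al^*|$ rather than via $|F_\al|$ directly.

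The substantive difference is the choice of good event. You work on $\cG_\e$, which caps your deterministic range at $d\lesssim n^{-1-3\e/2}$; the paper instead works on $\cG_\beta$ with $\beta<\e/2$, which pushes the deterministic emptiness all the way to $d\lesssim n^{-1-\e}$. This difference is exactly what forces you into the ``main obstacle'' you describe: you must control the intermediate range $n^{-1-3\e/2}\lesssim d\lesssim n^{-1}$ via the Gaussian density, which in turn demands the sharp degeneracy rate $\det\Sigma_c\asymp(nd)^6$. In the paper's setup the Gaussian range (Claim~\ref{claim:second_sum_in_seperatin_lemma_is_small}) begins only at $d\gtrsim n^{-1-\e}$, i.e.\ $nd\gtrsim n^{-\e}$, so a much cruder lower bound on the covariance determinant suffices and no sharp Schur--complement asymptotics are needed. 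In short: by taking $\beta$ small rather than equal to $\e$, the paper sidesteps the very computation you identify as the heart of your proof.

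Your claim $\det\Sigma_c\asymp(nd)^6$ is in fact correct --- it follows from a fourth--order Taylor expansion of the Gram matrix of $(P(x_\al),P(x_{\al'}),P'(x_\al),P'(x_{\al'}))$, and your confluent--Vandermonde heuristic gives the right exponents --- but the proposal only asserts it. Since this is the one nontrivial input, the write--up is incomplete as it stands. Either supply the determinant computation in full, or (much simpler) replace $\cG_\e$ by $\cG_\beta$ with $\beta<\e/2$ as the paper does; then your deterministic argument already covers all $d\le n^{-1-\e}$, the ``main obstacle'' disappears, and for $d\ge n^{-1-\e}$ the easy bound $\det\Sigma_c\gtrsim n^{-C\e}$ (which follows from the same Taylor expansion truncated at second order) is enough to close the sum.
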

 	To prove Lemma \ref{lemma:points_in_the_process_are_separated} we will need two claims. We first prove the lemma assuming both claims, and then turn to prove each claim separately. Denote by
 	\begin{equation}
 	\label{eq:two_sums_in_separation_lemma}
 	S_\RomanNumeralCaps{1} := N \sum_{\al=1}^{\lfloor n^{1-2\e}/2\pi \rfloor} \bP\left(\cA_0 \cap\cA_\al \right), \qquad S_\RomanNumeralCaps{2} := N \sum_{\al=\lceil n^{1-2\e}/2\pi \rceil}^{\lfloor n^{2-2\e}/2\pi \rfloor} \bP\left(\cA_0 \cap\cA_\al \right).
 	\end{equation}
 	\begin{claim}
 		\label{claim:first_sum_in_seperatin_lemma_is_small}
 		$S_\RomanNumeralCaps{1} = o(1)$ as $n\to\infty$.
 	\end{claim}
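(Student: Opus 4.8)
The plan is to show that, off an event of superpolynomially small probability, \emph{every} term $\bP(\cA_0\cap\cA_\al)$ with $2\le\al\le M:=\lfloor n^{1-2\e}/2\pi\rfloor$ vanishes identically, while the single remaining term $\al=1$ contributes $o(1/N)$; since $S_{\RomanNumeralCaps1}$ equals $N$ times a sum of at most $n^{1-2\e}$ such terms, this gives $S_{\RomanNumeralCaps1}=o(1)$. Fix $0<\be<\e/2$ and work on the event $\cG_\be$ of Lemma~\ref{lemma:second_derivative_is_small}. The decisive feature of the pairs entering $S_{\RomanNumeralCaps1}$ is that $x_\al\le n^{-1-\e}$, a scale much smaller than $1/n$, on which the bound $\norm{P^\dprime}_\infty\le n^{2+\be}$ forces $P$ to nearly coincide with its linear interpolant $F_0$. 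Recall from the high-school exercise that $F_0(Y_0)\perp P'(0)$, so that for every $x$,
\[
|F_0(x)|^2=|F_0(Y_0)|^2+|x-Y_0|^2\,|P'(0)|^2\ \ge\ |x-Y_0|^2\,|P'(0)|^2 .
\]

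For $2\le\al\le M$, on $\cA_0\cap\cA_\al$ we have $Y_0\in I_0$, $Y_\al\in I_\al$, $|P'(0)|\ge n^{1-\e/2}$, $|F_\al(Y_\al)|=|Z_\al|/n\le(\log n)/n$, and $|Y_\al-Y_0|\ge\operatorname{dist}(I_0,I_\al)=2\pi(\al-1)/N$; since moreover $|Y_\al|\le3\pi\al/N$ and $|Y_\al-x_\al|\le\pi/N$, applying Taylor's theorem about $0$ and about $x_\al$ on $\cG_\be$ gives $|F_0(Y_\al)-F_\al(Y_\al)|\le5\pi^2\al^2n^{2+\be}/N^2$. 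Substituting $x=Y_\al$ in the identity above and combining,
\[
\frac{2\pi(\al-1)}{N}\,n^{1-\e/2}\ \le\ |F_0(Y_\al)|\ \le\ \frac{\log n}{n}+\frac{5\pi^2\al^2}{N^2}\,n^{2+\be}.
\]
Using $n^{2-\e}-2\le N\le n^{2-\e}$, $\al-1\ge\al/2$ and $\al\le M$, the inequality rearranges to $1\le n^{-\e/2}\log n+5\,n^{\be-\e/2}$, whose right-hand side tends to $0$ because $\be<\e/2$. Hence $\cA_0\cap\cA_\al\cap\cG_\be=\emptyset$ for all $2\le\al\le M$ once $n$ is large, and so, by Lemma~\ref{lemma:second_derivative_is_small},
\[
N\sum_{\al=2}^{M}\bP(\cA_0\cap\cA_\al)\ \le\ N^2\,\bP(\cG_\be^c)\ \ll\ n^4e^{-n^\be}=o(1).
\]

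It remains to handle the adjacent term $\al=1$, for which $\operatorname{dist}(I_0,I_1)=0$ and the argument no longer produces emptiness; instead it pins down the minimum location. On $\cA_0\cap\cA_1\cap\cG_\be$ one has $|F_0(Y_1)|\le|F_1(Y_1)|+|F_0(Y_1)-F_1(Y_1)|\le(\log n)/n+\cO(n^{-2+2\e+\be})$, so the displayed identity and $|P'(0)|\ge n^{1-\e/2}$ give, for $n$ large,
\[
|Y_1-Y_0|\ \le\ \frac{|F_0(Y_1)|}{|P'(0)|}\ \le\ \de_n:=\frac{2\log n}{n^{2-\e/2}} .
\]
As $Y_1\ge\pi/N$ (the common endpoint of $I_0$ and $I_1$) and $Y_0\le\pi/N$, this forces $Y_0\in[\pi/N-\de_n,\pi/N]$, a sub-interval of $I_0$ of relative length $\ll n^{-\e/2}\log n$. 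Consequently
\[
\cA_0\cap\cA_1\ \subseteq\ \bigl\{\,Y_0\in[\pi/N-\de_n,\pi/N],\ |Z_0|\le\log n\,\bigr\}\cup\cG_\be^c ,
\]
and repeating the Gaussian computation from the proof of Lemma~\ref{lemma:probability_of_single_point_to_be_near_minima}, but with the range of admissible values of $Y_0$ shrunk from $I_0$ (of length $2\pi/N$) to an interval of length $\de_n$, bounds the probability of the first event by $\cO(\de_n\log n)=\cO(n^{-2+\e/2}(\log n)^2)$. Hence $N\,\bP(\cA_0\cap\cA_1)=\cO(n^{-\e/2}(\log n)^2)=o(1)$, and together with the previous display this proves the claim.

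I expect the main obstacle to be precisely why the argument has to take this indirect form. For such close pairs the usual Gaussian-density estimate is worthless: conditioning on $(P(0),P'(0))$ makes $P(x_\al)$ essentially deterministic --- its conditional variance is of order $(x_\al^2n^2)^2\to0$, so its conditional density is enormous --- and no volumetric bound on $\bP(|P(x_\al)|\le n^{-1/2}\mid P(0),P'(0))$ can be useful. The way out is to exploit this near-determinism in reverse, through $\cG_\be$: on that event $P$ stays genuinely close to a single affine function, which has only one near-minimum, so $\cA_0\cap\cA_\al$ is impossible unless $I_\al$ abuts $I_0$; and for the one abutting interval one still extracts the small but real gain that the minimum location must lie in a vanishing fraction of $I_0$.
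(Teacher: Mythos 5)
Your proof is correct and follows essentially the same route as the paper: on $\cG_\beta$ with $\beta<\e/2$ you compare the linear interpolants $F_0$ and $F_\al$ through the second-derivative bound and use $|P^\prime(0)|\ge n^{1-\e/2}$ on $\cA_0$ to rule out $\cA_0\cap\cA_\al$ for $\al\ge 2$, then handle the adjacent interval by localizing $Y_0$ near the common endpoint and reusing the Gaussian computation of Lemma \ref{lemma:probability_of_single_point_to_be_near_minima}, exactly as in the paper. The only differences are cosmetic: you use the Pythagorean identity instead of the triangle-inequality lower bound, derive a pointwise contradiction at $Y_\al$ rather than a uniform bound on $I_\al$, and for $\al=1$ your localization window $\de_n$ is smaller than the paper's $\pi/(Nn^{\e/4})$, which only sharpens the resulting $o(1)$ bound.
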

 	
 	\begin{claim}
 		\label{claim:second_sum_in_seperatin_lemma_is_small}
 		$S_\RomanNumeralCaps{2} = o(1)$ as $n\to\infty$.
 	\end{claim}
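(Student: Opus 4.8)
The plan is to split the sum defining $S_\RomanNumeralCaps{2}$ according to the size of $x_\al=2\pi\al/N$ (which ranges, up to constants, over $[n^{-1-\e},n^{-\e}]$), treating separately the \emph{intermediate} regime $n^{-1-\e}\le x_\al\le n^{-1+\e}$, where $P(0)$ and $P(x_\al)$ are still strongly correlated, and the \emph{decorrelated} regime $n^{-1+\e}\le x_\al\le n^{-\e}$. One preliminary remark is used in both cases: on $\cA_0\cap\cA_\al$ the values $|P(0)|$ and $|P(x_\al)|$ are in fact much smaller than the bound $n^{-1/2}$ appearing in $\cA^\dprime$. Indeed, from the identity $|P(x_\al)|^2=Y_\al^2|P'(x_\al)|^2+(Z_\al/n)^2$ the constraints $|Y_\al|\le\pi/N$, $|Z_\al|\le\log n$ and $|P'(x_\al)|\le C_0 n\sqrt{\log n}$ force $|P(x_\al)|\le\de_0:=C\sqrt{\log n}\,n^{-1+\e}$ for a suitable constant $C$, and likewise $|P(0)|\le\de_0$.

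In the intermediate regime I would keep only the constraint $\{|P(0)|\le\de_0\}\cap\{|P(x_\al)|\le\de_0\}$. Since the pseudo-covariances $\bE[P(0)^2]$, $\bE[P(0)P(x_\al)]$, $\bE[P(x_\al)^2]$ all vanish, $(P(0),P(x_\al))$ is a circularly symmetric centered complex Gaussian vector whose covariance has unit diagonal and off-diagonal entry $r_n(x_\al)$, so its density is at most $\bigl(\pi^2(1-r_n(x_\al)^2)\bigr)^{-1}$ and hence
\[
\bP(\cA_0\cap\cA_\al)\le\bP\bigl(|P(0)|\le\de_0,\ |P(x_\al)|\le\de_0\bigr)\le\frac{\de_0^4}{1-r_n(x_\al)^2}.
\]
An elementary estimate (Taylor expansion of $r_n$ near $0$, together with $|r_n(x)|\le\pi/((2n+1)x)$ for $x\in(0,\pi]$) shows that $1-r_n(x)^2\gg\min(1,(nx)^2)$, so $1-r_n(x_\al)^2\gg n^{-2\e}$ throughout this regime and $\bP(\cA_0\cap\cA_\al)\ll(\log n)^2 n^{-4+6\e}$. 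As there are only $\cO(n)$ indices $\al$ with $x_\al\in[n^{-1-\e},n^{-1+\e}]$, their contribution to $S_\RomanNumeralCaps{2}$ is $\ll N\cdot n\cdot(\log n)^2 n^{-4+6\e}=(\log n)^2 n^{-1+5\e}=o(1)$, since $\e<1/5$.

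In the decorrelated regime I would compare the joint law of $V:=(P(0),P'(0),P(x_\al),P'(x_\al))$ with the product of its two single-site marginals. Write $\Sigma$ for the Hermitian covariance of $V$ and $D:=\operatorname{diag}(1,\si_n^2,1,\si_n^2)$. The two within-site off-diagonal blocks of $\Sigma$ vanish because $r_n'(0)=0$, while the entries of the cross block of $D^{-1/2}\Sigma D^{-1/2}$ are $r_n(x_\al)$, $r_n'(x_\al)/\si_n$ and $r_n''(x_\al)/\si_n^2$, each of size $\cO(n^{-\e})$ on the range $x_\al\ge n^{-1+\e}$ by the bounds $|r_n(x)|\ll1/(nx)$, $|r_n'(x)|\ll1/x$ and $|r_n''(x)|\ll n/x$ valid for $x\gg1/n$. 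On $\cA_0\cap\cA_\al$ one has $|P(\cdot)|\le\de_0\le1$ and $|P'(\cdot)|\le C_0 n\sqrt{\log n}$, so $\|D^{-1/2}V\|^2=\cO(\log n)$; consequently $\det\Sigma=(1+\cO(n^{-\e}))\det D$ and $V^{*}\Sigma^{-1}V=\|D^{-1/2}V\|^2+\cO(n^{-\e}\log n)$ on that event, whence the density of $V$ is $(1+o(1))$ times the product density there. This gives $\bP(\cA_0\cap\cA_\al)\le(1+o(1))\,\bP(\cA_0)^2$, and the circularly symmetric Gaussian computation already performed in the proof of Lemma \ref{lemma:probability_of_single_point_to_be_near_minima}, with the interval $[a,b]$ replaced by $[-\log n,\log n]$, gives $\bP(\cA_0)\le\bP(\cA_0')\ll(\log n)/N$. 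Since there are $\cO(n^{2-2\e})$ indices in this regime, their contribution to $S_\RomanNumeralCaps{2}$ is $\ll N\cdot n^{2-2\e}\cdot(\log n)^2/N^2=(\log n)^2 n^{-\e}=o(1)$.

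The main obstacle is the decorrelation step: the comparison between the joint density of $V$ and the product density has to be made \emph{uniform} over the mildly unbounded region cut out by $\cA_0\cap\cA_\al$, on which $|P'(\cdot)|$ may be as large as $C_0 n\sqrt{\log n}$, and this is exactly what forces both the use of the strengthened bound $|P(\cdot)|\le\de_0$ and the normalization by $D^{-1/2}$. The remaining input, $\bP(\cA_0')\ll(\log n)/N$, is a routine rerun of the computation in Lemma \ref{lemma:probability_of_single_point_to_be_near_minima}, whose error terms stay $o((\log n)/N)$ when the interval is taken to be $[-\log n,\log n]$.
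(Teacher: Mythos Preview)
Your argument is correct; it follows the same high-level strategy as the paper (bound $\bP(\cA_0\cap\cA_\al)$ via the Gaussian density of the relevant joint vector and sum over $\al$), but the implementation is genuinely different in two places.

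\emph{Splitting point and near-diagonal regime.} The paper cuts at $x_\al=\e/n$ and, for $n^{-1-\e}\le x_\al\le \e/n$, bounds the full $4$-dimensional density of $(P(0),P'(0)/\si_n,P(x_\al),P'(x_\al)/\si_n)$ by $\big[\det(\Id-\Sigma_\al^2)\big]^{-1}$, computing this determinant via a third-order Taylor expansion of $r_n,r_n',r_n''$ to obtain the bound $\ll (n x_\al)^{-8}$. You instead cut at $x_\al=n^{-1+\e}$ and, on the intermediate range $[n^{-1-\e},n^{-1+\e}]$, drop the derivatives entirely: the identity $|P|^2=Y^2|P'|^2+(Z/n)^2$ sharpens the a priori bound to $|P(0)|,|P(x_\al)|\le\de_0\ll\sqrt{\log n}\,n^{-1+\e}$, and then the $2$-dimensional density bound $(1-r_n(x_\al)^2)^{-1}\ll n^{2\e}$ suffices. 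This buys you the result without the Taylor-expansion computation of the paper, at the price of needing $\e<1/5$ (harmless, since $\e=1/100$).

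\emph{Far regime.} For $x_\al\ge\e/n$ the paper simply notes that the density of $V_\al$ is uniformly bounded and multiplies by the volume of the region cut out by $\cA_0\cap\cA_\al$. You use a density \emph{comparison} with the product law (the normalized cross-block has entries $O(n^{-\e})$, and on $\cA_0\cap\cA_\al$ one has $\|D^{-1/2}V\|^2=O(\log n)$), which gives the cleaner statement $\bP(\cA_0\cap\cA_\al)\le(1+o(1))\bP(\cA_0)^2$ and then invokes $\bP(\cA_0)\ll(\log n)/N$. Both routes yield a contribution $\ll(\log n)^2 n^{-\e}$. Your version is slightly more conceptual (approximate independence), while the paper's is more hands-on; neither is harder than the other.
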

 	\begin{proof}[Proof of Lemma \ref{lemma:points_in_the_process_are_separated}]
 		Applying the union bound and stationarity we see that
 		\begin{align}
 		\bP\bigg(\bigcup_{\stackrel{\al\not=\al^\prime}{|x_\al - x_\al^\prime|\leq n^{-\e}}} \cA_\al \cap \cA_{\al^\prime} \bigg) & \leq \sum_{\stackrel{\al\not=\al^\prime}{|x_\al - x_\al^\prime|\leq n^{-\e}}} \bP\left(\cA_\al \cap \cA_{\al^\prime} \right) \nonumber \\ \nonumber &\ll N\sum_{\al=1}^{\lfloor n^{2-2\e}/2\pi \rfloor} \bP\left(\cA_0 \cap\cA_\al \right) = S_\RomanNumeralCaps{1} + S_\RomanNumeralCaps{2} = o(1),
 		\end{align}
 		where
		the last equality is due to Claims \ref{claim:first_sum_in_seperatin_lemma_is_small} and \ref{claim:second_sum_in_seperatin_lemma_is_small}.
 	\end{proof}
 	\begin{proof}[Proof of Claim \ref{claim:first_sum_in_seperatin_lemma_is_small}]
 		Fix some $\beta<\epsilon/2$, and consider first the term $\al = 1$ in the sum $S_\RomanNumeralCaps{1}$. Observe that for all $x\in I_1$,
 		\begin{equation*}
 		\left|F_0(x) - F_1(x)\right| \leq \left|F_0(x) - P(x)\right| + \left|F_1(x) - P(x)\right| \ll \frac{\sup_{x\in \bT}|P^\dprime(x)|}{N^2},
 		\end{equation*}
 		which yields that on the event $\cG_\beta$, for all $x\in I_1$ we have that
 		\begin{equation}
 			\label{eq:difference_between_two_adjacent_linear_interpolations_is_small}
 			|F_0(x) - F_1(x)| \ll n^{-2+3\e}.
 		\end{equation}
 		Following the same computation as done in the proof of Lemma \ref{lemma:probability_of_single_point_to_be_near_minima}, we see that
 		\[
 		\bP\left(\cA_0 , \ |Y_0| \in \left[\frac{\pi}{N} - \frac{\pi}{Nn^{\e/4}},\frac{\pi}{N}\right]\right) \ll \frac{1}{Nn^{\e/4}},
 		\]
 		whence, on the event $\cA_0 \cap \left\{|Y_0| \leq \frac{\pi}{N}-\frac{\pi}{Nn^{\e/4}}\right\}$ we have that
 		\begin{equation}
 			\label{eq:lower_bound_for_linear_interpolation_end_of_interval}
 			\left|F_0\left(\frac{\pi}{N}\right)\right| = \left|\frac{Z_0}{n} + \left(\frac{\pi}{N}-Y_0\right)P^\prime(0)\right| \gg \frac{n^{1-\e/2}}{Nn^{\e/4}} - \frac{\log n}{n} \gg \frac{1}{n^{1-\e/4}}.
 		\end{equation}
 		Therefore, we combine (\ref{eq:difference_between_two_adjacent_linear_interpolations_is_small}) and (\ref{eq:lower_bound_for_linear_interpolation_end_of_interval}) to conclude that on the event $\cG_\beta \cap \cA_0 \cap \left\{|Y_0| \leq \frac{\pi}{N}-\frac{\pi}{Nn^{\e/4}}\right\}$ we have that for all $x\in I_1$, 
 		\[
 		|F_1(x)| \geq |F_0(x)| - |F_1(x) - F_0(x)| \geq \left|F_0\left(\frac{\pi}{N}\right)\right| - |F_1(x) - F_0(x)| \gg \frac{1}{n^{1-\e/4}},   
 		\]
 		which implies that the event $\cA_1$ does not hold. 
		Altogether, we apply Lemma \ref{lemma:second_derivative_is_small} and obtain that
 		\begin{align*}
 		N\bP\left(\cA_0 \cap \cA_1 \right) &\leq N\bP\left(\cA_0 \cap \left\{ |Y_0| \in \left[\frac{\pi}{N} - \frac{\pi}{Nn^{\e/4}},\frac{\pi}{N}\right] \right\}\right) \\ & \qquad  + N\bP\left( \cA_0 \cap \cA_1 \cap \left\{|Y_0| \leq \frac{\pi}{N}-\frac{\pi}{Nn^{\e/4}} \right\}\right) \\
		&\ll n^{-\e/4} + N\bP\left(\cG^c_\beta\right) \ll n^{-\e/4}.
 		\end{align*}
 		The treatment of the rest of the sum $S_\RomanNumeralCaps{1}$ is similar, only that we do not have to impose the extra separation within the interval. We have for all $x\in I_\al$ that
 		\begin{equation}
 		\label{eq:difference_between_linear_approximations_separation_lemma}
 		\left|F_0(x) - F_\al(x)\right| \le \left|F_0(x) - P(x)\right| + \left|F_\al(x)  - P(x)\right| \ll |x_\al|^2 \sup_{x\in \bT}|P^\dprime(x)|.
 		\end{equation}
 		Furthermore, on the event $\cA_0$, we have the lower bound 
 		\begin{equation}
 		\label{eq:lower_bound_for_linear_approximation_separation_lemma}
 		|F_0(x)| \ge \left|F_0\left(x_\al - \frac{\pi}{N}\right)\right| \geq \left|x_{\al-1}\right||P^\prime(0)| - \frac{\log n}{n} \gg |x_{\al-1}| n^{1-\e/2}
 		\end{equation}
 		for all $x\in I_\al$. Recalling that
 		$\beta< \e/2$, we combine (\ref{eq:difference_between_linear_approximations_separation_lemma}) and (\ref{eq:lower_bound_for_linear_approximation_separation_lemma}) to see that on the event $\cG_\beta$,
 		\[
 		\forall x\in I_\al, \qquad |F_\al(x)| \gg \left|x_{\al-1}\right|n^{1-\e/2} + |x_\al|^2 n^{2+\beta} \gg \frac{1}{n^{1-\e/2}}
 		\]
 		(here we use that $n^{-1-\e} \gg |x_{\al-1}| \ge 2\pi/N$) which implies that $\cA_\al$ does not hold. We thus have,
 		\[
 		S_{\RomanNumeralCaps{1}} \ll n^{-\e/4} + N
 		\cdot \lfloor n^{1-2\e}/2\pi \rfloor\cdot \bP\left(\cG^c_\beta\right) \ll n^{-\e/4} +  n^{3-3\e} e^{-n^{\beta}} = o(1).
 		\]
 	\end{proof}
 	\begin{proof}[Proof of Claim \ref{claim:second_sum_in_seperatin_lemma_is_small}]
 		Denote by $\Id$ the identity matrix. The random vector
 		$
 		V_\al:=\left(P(0),\frac{P^\prime(0)}{\sigma_n}, P(x_\al),\frac{P^\prime(x_\al)}{\sigma_n} \right)
 		$ is a mean zero complex Gaussian with independent real 
		and imaginary component; the covariance matrix of
		both the real and imaginary parts is
		$\frac{1}{2}\begin{pmatrix}
 		\Id & \Sigma_\al \\ \Sigma_\al & \Id
 		\end{pmatrix},$ where
 		\[
 		\Sigma_\al := \begin{pmatrix}
 		r_n(x_\al) & -r_n^\prime(x_\al)/\sigma_n \\  r_n^\prime(x_\al)/\sigma_n & -r_n^\dprime(x_\al)/\sigma_n^2
 		\end{pmatrix},
 		\]
 		%and $r=r^{(0)}$,
 		%	 $r^{(\ell)}= r_n^{(\ell)}(x_\al)$ for $\ell = 0,\prime,\dprime$,
 		%	 with $r_n$ as in \eqref{eq:covariance_kernel_of_polynomial}. \\ \corY{Note to Oren: add more details in the Taylor approximation here, try to avoid/simplify the det formula}
 		and $r_n$ is given by 
		(\ref{eq:covariance_kernel_of_polynomial}).
		%(We are slightly abusing notation here, exploiting the fact that
		%  the real and imaginary parts of $V_\alpha$
		%  are independent and identically distributed. 
		%  Thus, one should think of $\Sigma_\alpha$ and $\Id$
		%  as a shorthand for
		%a $4\times 4$ matrix with real entries.) 
		The density of the vector $V_\al$ in $\bC^4$ (or $\bR^8$) is bounded from above by a constant multiple of
 		\begin{align}
 		\label{eq:determinant_computation_for_density_V}
 		\left[\det\begin{pmatrix}
 		\Id & \Sigma_\al \\ \Sigma_\al & \Id
 		\end{pmatrix}\right] ^{-4} &= \left[\det\left(\Id - \Sigma_\al^2\right) \right]^{-4} \\ &= \left[\det\left(\Id - \Sigma_\al\right) \det\left(\Id + \Sigma_\al\right) \right]^{-4}. \nonumber
 		\end{align}
 		Differentiating (\ref{eq:covariance_kernel_of_polynomial}) twice and applying
		some trigonometric identities yield that
 		\begin{align*}
 		&r_n(x) = \frac{\sin\left(\left(n+\frac{1}{2}\right)x\right)}{(2n+1)\sin(x/2)}, \\ &r_n^\prime (x) = \frac{1}{2\sin(x/2)} \left[\cos\left(\left(n+1/2\right)x\right) - \frac{\cos(x/2) \sin\left(\left(n+\frac{1}{2}\right)x\right)}{(2n+1)\sin(x/2)}\right],  \\ & r_n^\dprime (x) = \frac{n^2 \sin \left(\left(n+\frac{3}{2}\right) x\right) + (n+1)^2 \sin \left(\left(n-\frac{1}{2}\right) x\right) - (2 n (n+1)-1) \sin \left(\left(n+\frac{1}{2}\right) x\right)}{\sin^3(x/2)( 8 n+4)}.
 		\end{align*}
 		Using Taylor's approximation and some algebra it is evident that for $n^{-1-2\e} \leq |x_\al| \leq \e/n$ we have
 		\begin{align}
 		\label{eq:taylor_expansion_for_correlations}
 		r_n(x_\al) &= 1- \frac{(n x_\al) ^2}{6} + \frac{(nx_\al)^4}{120} + \cO\left(|nx_\al|^6\right), \nonumber \\ \frac{r_n^\prime(x_\al)}{\sigma_n} &= - \frac{nx_\al}{\sqrt{3}} + \frac{(nx_\al)^3}{10\sqrt{3}} + \cO\left(|nx_\al|^5\right), \\ -\frac{r_n^\dprime(x_\al)}{\sigma_n^2} &= 1 - \frac{(nx_\al)^2}{10} + \cO\left(|nx_\al|^4\right). \nonumber
 		\end{align}
 		Using (\ref{eq:taylor_expansion_for_correlations}), we see that
 		\begin{align*}
 		\det\left(\Id - \Sigma_\al \right) &= \big(1- r_n(x_\al)\big)\left(1 + r_n^\dprime(x_\al)/\sigma_n^2\right) + \big(r_n^\prime(x_\al)/\sigma_n \big)^2 = \frac{(n x_\al)^2}{3} + \cO(|n x_\al|^4), \\ \det\left(\Id + \Sigma_\al \right) &= \big(1 + r_n(x_\al)\big)\left(1 - r_n^\dprime(x_\al)/\sigma_n^2\right) + \big(r_n^\prime(x_\al)/\sigma_n \big)^2 = 4 + \cO(|n x_\al|^2),
 		\end{align*}
 		which together with (\ref{eq:determinant_computation_for_density_V}) implies that
 		\begin{equation}
 		\label{eq:bound_on_det_density_of_V}
 		\left[\det\begin{pmatrix}
 		\Id & \Sigma_\al \\ \Sigma_\al & \Id
 		\end{pmatrix}\right] ^{-4} \ll  \frac{1}{(nx_\al)^8} \qquad \text{for } \  n^{-1-2\e} \leq |x_\al| \leq \e/n.
 		\end{equation} 
 		Furthermore, for $|x_\al| \ge \e/n$, the density of $V_\al$ is uniformly bounded from above by a constant $C=C_\e$. Combining this observation with (\ref{eq:bound_on_det_density_of_V}), we can bound the sum $S_{\RomanNumeralCaps{2}}$ (recall (\ref{eq:two_sums_in_separation_lemma})) as 
 		\begin{align*}
 		S_{\RomanNumeralCaps{2}} &\leq N \sum_{\al=\lceil n^{1-2\e}/2\pi \rceil}^{\lfloor \e N/2\pi n \rfloor} \bP\left(\cA_0 \cap\cA_\al \right) + N \sum_{\al=\lceil \e N/2\pi n \rceil}^{\lfloor n^{2-2\e}/2\pi \rfloor} \bP\left(\cA_0 \cap\cA_\al \right) \\ & \ll \frac{(\log n)^2}{N} \sum_{\al=\lceil n^{1-2\e}/2\pi \rceil}^{\lfloor \e N/2\pi n \rfloor} n^{8\e} + \frac{(\log n)^2}{N}  \sum_{\al=\lceil \e N/2\pi n \rceil}^{\lfloor n^{2-2\e}/2\pi \rfloor} C_\e \\ & \ll \frac{(\log n)^2 n^{8\e}}{n} + \frac{(\log n)^2}{n^\e} = o(1).
 		\end{align*}
 	\end{proof}
 	\section{Liggett's invariance and proof of Theorem
	\ref{thm:limiting_distribution_of_minimum}}
	\label{sec-liggett}
	In this section, we prove the convergence in distribution
	of $\mathcal{M}_n$ to a Poisson process of constant intensity,
      following a characterization of the latter due to Liggett. In this, we follow a method developed by Biskup and Louidor in their study of the two dimensional 
      Gaussian free field \cite{biskup_louidor}. For more background, see
      the lecture notes \cite{biskup}.

      The first step is to rewrite $P_n$ as a sum of two independent polynomials.
 	Denote by $Q=Q_n$ an independent copy of the random polynomial $P=P_n$, and consider the random polynomial (of degree $n$) given by
 	\begin{equation}
 	\label{eq:def_of_polynomial_with_perturbation}
 	\widehat{P}_n(x) := \sqrt{1-\frac{1}{n^2}}P_n(x) + \frac{1}{n} Q_n(x).
 	\end{equation}
 	Clearly, $\what{P}_n \stackrel{\mbox{\scriptsize{\rm law}}}{=} P_n$, and hence
 	\begin{equation}
 		\label{eq:point_processes_have_the_same_law}
		\what{\cM}_n \stackrel{\mbox{\scriptsize{\rm law}}}{=} \cM_n,
 	\end{equation}
 	where $\cM_n$ is the extremal process (\ref{eq:definition_of_extremal_process}) and $\what{\cM}_n$ is the extremal process that corresponds to the polynomial $\widehat{P} = \widehat{P}_n$. The goal of this section is to study the relation between these two point processes. Let $\widehat{X}_\al$ and $\what{Y}_\al$ be the analogous variables to $X_\al$ and $Y_\al$, which correspond to the polynomial $\what{P}$ instead on $P$, see (\ref{eq-defA}).
 	
 	By Corollary \ref{cor:tightness_of_point_process} (tightness of the sequence $\{\cM_n\}$), we can find a subsequence $\{n_k\}\subset \bN$ so that
 	\begin{equation}
 		\label{eq:subsequential_convergence_of_both_processes}
 		\cM_{n_k}\xrightarrow{\ d \ } \cM_\infty, \quad \what{\cM}_{n_k} \xrightarrow{\ d \ } \ \what{\cM}_\infty,
 	\end{equation}
 	as $k\to\infty$. We denote the law of the limiting point process $\cM_\infty$ by $\eta$, then by (\ref{eq:point_processes_have_the_same_law}) it is evident that
 	\[
	  \eta \stackrel{\mbox{\scriptsize{\rm law}}}{=} \what{\cM}_\infty.
 	\]
	The following lemma is the key element in
	extracting the Poisson limit, which is a consequence of $\eta$ having an invariance property. As usual, for $f\in C_c(\bR)$ we denote the linear statistics of a point process $W$ by
	\[
	\avg{W,f} := \sum_{w\in W} f(w).
	\]
 	\begin{lemma}
 		\label{lemma:limiting_pp_satisfy_invariance}
 		Let $\eta$ be given as above and let $f\in C_c(\bR)$ be a non-negative function. Then
 		\begin{equation}
 			\label{eq:liggett_property_for_pp}
 			\bE\left[e^{-\avg{\eta,f}}\right] = \bE\big[ e^{-\langle\eta,\ti{f}\rangle}\big],
 		\end{equation}
 		where
 		\[
		  \ti{f}(x):= -\log \bE_{G} \left[e^{-f(x + G)} \right]
 		\]
		and $G\sim \mathcal{N}_{\bR} (0, 1/2)$. Here $\bE_{G}$ denotes the expectation with respect to the Gaussian variable $G$.
 	\end{lemma}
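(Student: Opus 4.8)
The plan is to exploit the decomposition $\widehat P_n = \sqrt{1-n^{-2}}\,P_n + n^{-1} Q_n$ from \eqref{eq:def_of_polynomial_with_perturbation}, and to show that the extremal process $\widehat{\cM}_n$ of $\widehat P_n$ is, up to negligible errors, obtained from the extremal process $\cM_n$ of $P_n$ by independently perturbing each point $X_\al$ by a Gaussian random variable coming from the contribution of $Q_n$ at the near-minimum location. Since $\widehat{\cM}_n \stackrel{\mbox{\scriptsize{\rm law}}}{=} \cM_n$ and both converge along the subsequence $\{n_k\}$ to $\cM_\infty$ and $\widehat{\cM}_\infty$ with $\cM_\infty \stackrel{\mbox{\scriptsize{\rm law}}}{=} \widehat{\cM}_\infty$, passing to the limit in the identity $\bE[e^{-\langle \widehat{\cM}_n, f\rangle}] = \bE[e^{-\langle \cM_n, \ti f_n\rangle}]$ (with $\ti f_n$ the appropriate ``smoothed'' version of $f$) will yield \eqref{eq:liggett_property_for_pp}.

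First I would fix a non-negative $f \in C_c(\bR)$ and compute $\bE[e^{-\langle \widehat{\cM}_n, f\rangle}]$ by conditioning on the polynomial $P_n$ (equivalently on the data $\{(P(x_\al), P'(x_\al))\}$). On the good event $\cA_\al$ for $P_n$, the near-minimum value of $\widehat P_n$ in $I_\al$ should be computable: by the high-school exercise applied to $\widehat F_\al$ and the a-priori $C^2$ bound (Lemma \ref{lemma:second_derivative_is_small}) applied to both $P_n$ and $Q_n$, one gets $\widehat X_\al = X_\al + n\,\im(Q_n(x_\al^*)\overline{P'(x_\al^*)})/|P'(x_\al^*)| + (\text{small})$ where $x_\al^*$ is the linear-interpolation minimizer; the key point is that $P'(x_\al^*) \approx P'(x_\al)$ and that the relevant combination of $Q_n(x_\al)$, $Q_n'(x_\al)$ is, conditionally on $P_n$, a real Gaussian. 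One must check its conditional variance converges: since $n^{-1}Q_n$ contributes and $|P'(x_\al)| \asymp n$ on $\cA_\al^\dprime$, a computation using $\bE|Q_n'(0)|^2 = \sigma_n^2 \sim n^2/3$ and $\bE|Q_n(0)|^2 = 1$ together with $\e$-small adjustments shows the conditional variance of the perturbation is $1/2 + o(1)$, matching $G \sim \mathcal N_\bR(0,1/2)$. (The factor $1/2$ arises because only the imaginary part of a complex Gaussian contributes, and the $1/n$ weighting cancels the $\sigma_n \asymp n$.) Then, conditionally, $\bE[e^{-f(\widehat X_\al)}\,|\,P_n] \approx \bE_G[e^{-f(X_\al + G)}] = e^{-\ti f(X_\al)}$, so that $\bE[e^{-\langle \widehat{\cM}_n, f\rangle}] = \bE[e^{-\langle \cM_n, \ti f_n\rangle}] + o(1)$ where $\ti f_n \to \ti f$ uniformly.

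The remaining structural input is that the perturbations attached to distinct intervals are (asymptotically) independent of each other and of $\cM_n$: the variables $Q_n(x_\al), Q_n'(x_\al)$ for $\al$ ranging over the few indices whose intervals carry points of $\cM_n$ are jointly Gaussian with correlations governed by $r_n$, and by Lemma \ref{lemma:points_in_the_process_are_separated} those indices are separated by at least $n^{-\e}$, on which scale $r_n(x)$ and its derivatives are $o(1)$ (indeed $O(1/(n|x|)) = o(1)$). Hence the perturbations decorrelate, and since they are jointly Gaussian, decorrelation gives asymptotic independence; together with tightness (Corollary \ref{cor:tightness_of_point_process}) this lets one replace the joint conditional expectation of $\prod e^{-f(\widehat X_\al)}$ by $\prod e^{-\ti f(X_\al)}$ with vanishing error. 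Finally I would take $n = n_k \to \infty$: the left side converges to $\bE[e^{-\langle \widehat{\cM}_\infty, f\rangle}] = \bE[e^{-\langle \eta, f\rangle}]$ (using $\widehat{\cM}_\infty \stackrel{\mbox{\scriptsize{\rm law}}}{=} \eta$ and that $f$ is a bounded continuous functional for the vague topology since $f$ is compactly supported), and the right side converges to $\bE[e^{-\langle \cM_\infty, \ti f\rangle}] = \bE[e^{-\langle \eta, \ti f\rangle}]$ (here one uses that $\ti f$ is again non-negative, continuous and compactly supported, the latter because $\ti f(x) \le f(x + \cdot)$ arguments and $\ti f \ge 0$ with $\ti f(x) \to 0$ as $|x|\to\infty$ — in fact $\spt \ti f = \spt f$ since $f \ge 0$). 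This gives \eqref{eq:liggett_property_for_pp}.

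The main obstacle will be the first step: carefully showing that on $\cA_\al$ the near-minimum value $\widehat X_\al$ of the \emph{perturbed} polynomial is genuinely $X_\al$ plus an (approximately) $\mathcal N_\bR(0,1/2)$ variable that is independent, across the relevant $\al$'s and of $\cM_n$, up to errors that survive summation over $\al$ (there are $N \asymp n^{2-\e}$ intervals, so each error must be $o(N^{-1})$ in the relevant averaged sense). This requires controlling: (i) the displacement $x_\al^* - x_\al$ of the true minimizer under the perturbation, using the $C^2$ bound to see $|\widehat Y_\al - Y_\al| = o(1/N)$; (ii) that the derivative direction $P'(x_\al^*)/|P'(x_\al^*)|$ barely moves, so the perturbation's conditional variance is stable; and (iii) the conditioning/decorrelation bookkeeping, which is where the Gaussian nature of the coefficients (needed so that ``uncorrelated implies independent'') is genuinely used — as the authors note, this is the unique place Gaussianity is essential. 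Everything else is a repetition of the Gaussian computations already performed for Lemma \ref{lemma:probability_of_single_point_to_be_near_minima}.
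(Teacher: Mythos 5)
Your proposal is correct and follows essentially the same route as the paper: decompose $\widehat P_n$ as in (\ref{eq:def_of_polynomial_with_perturbation}), condition on $\mathcal{F}_P$, write $\widehat X_\al = X_\al + G_\al + o(1)$ with $G_\al$ a conditionally Gaussian $\mathcal{N}_{\bR}(0,1/2)$ kick, use the separation statement (Lemma \ref{lemma:points_in_the_process_are_separated}, together with Lemma \ref{lemma:points_in_both_processes_are_similar}) to decorrelate the kicks --- the paper makes your ``decorrelation gives asymptotic independence'' step quantitative by quoting Lemma \ref{lemma:almost_independent_gaussians_are_like_gaussians} from Biskup--Louidor --- and then passes to the limit along $n_k$, exactly as you outline. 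Two small slips that do not affect the plan: the kick is $\im(Q(x_\al)\overline{P'(x_\al)})/|P'(x_\al)|$ \emph{without} your extra factor of $n$ (the $n$-scaling built into $Z_\al$ cancels the $1/n$ in front of $Q_n$, and the variance $1/2$ comes from $\bE|Q(x_\al)|^2=1$, not from $Q_n'$), and $\ti f$ is \emph{not} compactly supported (it is strictly positive on all of $\bR$ since $G$ has full support), though its Gaussian decay combined with the uniform intensity bound of Corollary \ref{cor:limit_intensity_of_point_process} justifies the final passage to the limit just as well.
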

 	Before proving Lemma \ref{lemma:limiting_pp_satisfy_invariance}, we will need two simple results. Lemma \ref{lemma:almost_independent_gaussians_are_like_gaussians} gives a quantitative approximation for ``almost-independent" normal variables as truly independent normal variables. Lemma \ref{lemma:points_in_both_processes_are_similar} tells us that with high probability, the perturbation (\ref{eq:def_of_polynomial_with_perturbation}) did not introduce any new points into the extermal process, nor did it delete the points that where present before the perturbation.
 	
	\begin{lemma}[{\cite[Lemma 4.9]{biskup_louidor}}]
 		\label{lemma:almost_independent_gaussians_are_like_gaussians}
 		Fix $\sigma^2>0$ and $m\ge 1$. Suppose that $G_1,\ldots, G_m$ are i.i.d. $\mathcal{N}_{\bR}(0,\sigma^2)$. For each $f\in C_c(\bR)$ and for each $\e>0$ there exist $\delta>0$ such that if $\left(W_1,\ldots,W_m\right)$ are multivariate normal with $\bE[W_i] = 0$ and 
 		\[
 		\max_{1\leq i,j\leq m} \left|\bE\left[W_i W_j\right] - \sigma^2\delta_{i,j}\right|\leq \delta 
 		\]
 		then
 		\[
 		\left|\log \frac{\bE \exp \left\{\sum_{i=1}^{m} f(X_i)\right\}}{\prod_{i=1}^{m}\bE (e^{f(Y_i)})}\right| < \e.
 		\]
 	\end{lemma}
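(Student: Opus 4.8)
Since Lemma~\ref{lemma:almost_independent_gaussians_are_like_gaussians} is quoted from \cite{biskup_louidor}, let me just indicate the argument I would use; it is the standard Gaussian interpolation (Price/Slepian type) trick. The point is that the ratio $\bE\exp\{\sum_{i=1}^m f(W_i)\}\big/\prod_{i=1}^m\bE[e^{f(W_i)}]$ depends on the law of $(W_1,\dots,W_m)$ only through its covariance matrix, and is a continuous function of that matrix near $\sigma^2\Id$. (Note that in the statement $X_i,Y_i$ should of course read $W_i,G_i$.)

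The first step is to reduce to a smooth test function. Fix $f\in C_c(\bR)$ and $\e>0$. Since $\bE\exp\{\sum_{i=1}^m f(W_i)\}\in[e^{-m\norm{f}_\infty},e^{m\norm{f}_\infty}]$, and likewise for $\prod_i\bE[e^{f(G_i)}]$ and for the two quantities obtained by replacing $f$ by a nearby $g$, and since $|e^a-e^b|\le e^{\max(a,b)}|a-b|$, replacing $f$ by $g$ changes the quantity $\log\big(\bE\exp\{\sum_i f(W_i)\}/\prod_i\bE[e^{f(G_i)}]\big)$ by at most $C(m,\norm{f}_\infty)\,\norm{f-g}_\infty$, \emph{uniformly} over the law of $(W_1,\dots,W_m)$. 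So I pick once and for all a mollification $g\in C_c^\infty(\bR)$ of $f$ with $\norm{f-g}_\infty$ small enough that this change is $<\e/2$; it then suffices to prove the inequality for this fixed smooth $g$ with $\e/2$ in place of $\e$, and the implicit constants are henceforth allowed to depend on $g$.

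For the smooth function $g$ I would interpolate the covariance. Let $\Sigma:=(\bE[W_iW_j])_{i,j}$ and $\Sigma_t:=(1-t)\sigma^2\Id+t\Sigma$ for $t\in[0,1]$, which is positive semidefinite; let $H^{(t)}=(H^{(t)}_1,\dots,H^{(t)}_m)$ be a centered Gaussian vector with covariance $\Sigma_t$, so that $H^{(0)}$ has the law of $(G_1,\dots,G_m)$ and $H^{(1)}$ that of $(W_1,\dots,W_m)$. Writing $\Phi(x):=\exp\{\sum_{i=1}^m g(x_i)\}$, which is $C^2$, bounded by $e^{m\norm{g}_\infty}$, with $\partial_i\partial_j\Phi=(g'(x_i)g'(x_j)+\delta_{ij}g''(x_i))\Phi$ likewise bounded, the classical formula for differentiating a Gaussian average in its covariance gives
\[
\frac{d}{dt}\,\bE\big[\Phi(H^{(t)})\big]=\tfrac12\sum_{i,j=1}^m\big(\Sigma_{ij}-\sigma^2\delta_{ij}\big)\,\bE\big[\partial_i\partial_j\Phi(H^{(t)})\big].
\]
Since $\bE[\Phi(H^{(0)})]=\prod_i\bE[e^{g(G_i)}]$ by independence and $\bE[\Phi(H^{(1)})]=\bE\exp\{\sum_i g(W_i)\}$, integrating in $t$ and using the hypothesis $\max_{i,j}|\Sigma_{ij}-\sigma^2\delta_{ij}|\le\de$ bounds the numerator--denominator difference by $\tfrac{m^2}{2}(\norm{g'}_\infty^2+\norm{g''}_\infty)e^{m\norm{g}_\infty}\de$; as both quantities are $\ge e^{-m\norm{g}_\infty}$, this bounds the logarithm of the ratio by a ($g,m$)-dependent constant times $\de$. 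Choosing $\de$ small enough (with $g$, $m$, $\sigma$ now fixed) finishes the proof.

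The only thing to watch is that the covariance-derivative identity needs the test function to be $C^2$ with controlled growth, which is exactly why the mollification is done first; once $g$ is smooth the estimate is entirely elementary, and allowing the constant to depend on $g$ is harmless because $\de$ is chosen last. (If one prefers to avoid smoothing, the same conclusion follows by running the interpolation at the level of the Gaussian densities $\gamma_{\Sigma_t}$ and estimating $\sup_t\norm{\gamma_{\Sigma_t}-\gamma_{\sigma^2\Id}}_{L^1}$ in terms of $\de$, then using $\norm{f}_\infty$; the route above just keeps the constants transparent.) This lemma is then used, together with the separation of Lemma~\ref{lemma:points_in_the_process_are_separated}, to treat the conditionally-Gaussian ``kicks'' attached to the near-minima by the perturbation~(\ref{eq:def_of_polynomial_with_perturbation}) as if they were genuinely independent, which is the key to the invariance identity of Lemma~\ref{lemma:limiting_pp_satisfy_invariance}.
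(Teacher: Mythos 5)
The paper does not actually prove this lemma: it is imported verbatim (including the typo $X_i,Y_i$ for $W_i,G_i$, which you correctly flag) from \cite[Lemma 4.9]{biskup_louidor}, so there is no in-paper argument to compare against. Judged on its own, your Gaussian-interpolation proof is correct and self-contained. The two-step structure works: the reduction to a smooth compactly supported $g$ costs at most $2m\norm{f-g}_\infty$ in the log-ratio, uniformly over the law of $(W_1,\ldots,W_m)$, so it suffices to treat $g$; and for smooth $g$ the derivative-in-covariance formula applied to $\Phi(x)=\exp\{\sum_i g(x_i)\}$, whose second partials are bounded by $(\norm{g'}_\infty^2+\norm{g''}_\infty)e^{m\norm{g}_\infty}$, bounds the difference of numerator and denominator by a constant depending only on $g$ and $m$ times $\de$, while both quantities are bounded below by $e^{-m\norm{g}_\infty}$; hence $\de$ can be chosen depending only on $f,\e,m$ and not on the particular law of $W$, which is exactly the uniformity the lemma requires. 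One point worth making explicit: since $\Sigma_t=(1-t)\sigma^2\Id+t\Sigma$ may be singular, the cleanest justification of the interpolation identity is to realize $H^{(t)}$ as $\sqrt{t}\,W+\sqrt{1-t}\,G$ with $W$ and $(G_1,\ldots,G_m)$ independent, for which the formula is the standard interpolation lemma with no nondegeneracy assumption; with that remark your argument is complete. Your closing description of how the lemma is deployed (conditionally on $\mathcal{F}_P$, together with the separation of Lemma \ref{lemma:points_in_the_process_are_separated}, to treat the Gaussian kicks as independent) matches its use in the proof of Lemma \ref{lemma:limiting_pp_satisfy_invariance}.
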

 	
 	\begin{lemma}
 		\label{lemma:points_in_both_processes_are_similar}
 		We have that
 		\[
 		\lim_{K\to\infty} \limsup_{n\to\infty} \bP\left(\bigcup_{\al = -N/2}^{N/2-1} \left\{ |X_\al| \leq K, \ |\widehat{X}_\al| > 2K \right\} \right) = 0.
 		\]
 	\end{lemma}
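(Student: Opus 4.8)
The plan is to bound the probability in question by a union bound over $\al$, estimating each $\bP\big(|X_\al|\le K,\ |\widehat{X}_\al|>2K\big)$ through a perturbative comparison of $\widehat{X}_\al$ with $X_\al$ on the event $\cA_\al$, which holds whenever $|X_\al|\le K$.

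First I would set up the perturbation. Writing $\rho:=\sqrt{1-n^{-2}}$ we have $\widehat{P}=\rho P+\tfrac1n Q$ with $Q$ independent of $P$, so $\widehat{P}(x_\al)=\rho P(x_\al)+\tfrac1n Q(x_\al)$ and $\widehat{P}'(x_\al)=\rho P'(x_\al)+\tfrac1n Q'(x_\al)$. On $\{|X_\al|\le K\}$ the event $\cA_\al$ holds, hence $|P(x_\al)|\le n^{-1/2}$ (in fact $\ll n^{-1+2\e}$ by \eqref{eq:bound_on_value_of_poly_assuming_crossing}), $|P'(x_\al)|\in[n^{1-\e/2},C_0n\sqrt{\log n}]$ and $|Z_\al|\le K$. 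Restricting in addition to $\cQ_\al^L:=\{|Q(x_\al)|\le L,\ |Q'(x_\al)|\le Ln\}$ and expanding the definition $\widehat{Z}_\al=n\,\im{\big(\widehat{P}(x_\al)\overline{\widehat{P}'(x_\al)}\big)}/|\widehat{P}'(x_\al)|$, I would verify, using that $|P'(x_\al)|\ge n^{1-\e/2}$ dominates every error term, that
\[
\widehat{Z}_\al=Z_\al+G_\al+r_\al,\qquad G_\al:=\im{\big(Q(x_\al)\,\overline{P'(x_\al)}/|P'(x_\al)|\big)},\qquad |r_\al|\le C(K,L)\,n^{-1/2+\e},
\]
and similarly $|\widehat{Y}_\al-Y_\al|=o(1/N)$, $|\widehat{P}(x_\al)|=o(n^{-1/2})$ and $|\widehat{P}'(x_\al)|=(1+o(1))|P'(x_\al)|$, all $o(\cdot)$'s uniform in $\al$ once $K,L$ are fixed. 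The crucial point is that $G_\al$ is independent of $\sigma(P)$ with law $\mathcal N_{\bR}(0,1/2)$: conditionally on $P$, the variable $Q(x_\al)$ is still a standard complex Gaussian (by independence of $P$ and $Q$), and multiplying it by the $\sigma(P)$-measurable unit complex number $\overline{P'(x_\al)}/|P'(x_\al)|$ leaves it a standard complex Gaussian, so its imaginary part is $\mathcal N_{\bR}(0,1/2)$ irrespective of $P$. This is precisely the Gaussian perturbation appearing in Lemma \ref{lemma:limiting_pp_satisfy_invariance}.

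Next I would record that away from the boundaries of the inequalities defining $\cA_\al$ the event $\widehat{\cA}_\al$ is automatically satisfied. Let $B_\al$ be the event that $|P'(x_\al)|$ lies within an $n^{-\e/4}$-fraction of an endpoint of $[n^{1-\e/2},C_0n\sqrt{\log n}]$, or that $|Y_\al|$ lies within an $n^{-\e/4}$-fraction of $\pi/N$. On $\{|X_\al|\le K\}\cap\cQ_\al^L\cap B_\al^c$, for $n$ large depending on $K$ and $L$, the estimates above give $|\widehat{P}(x_\al)|\le n^{-1/2}$, $|\widehat{P}'(x_\al)|\in[n^{1-\e/2},C_0n\sqrt{\log n}]$, $|\widehat{Z}_\al|\le K+L+o(1)\le\log n$ and $\widehat{Y}_\al\in I_\al$, that is $\widehat{\cA}_\al$ holds; there $\widehat{X}_\al=\widehat{Z}_\al$, and since $|Z_\al|\le K$ and $|r_\al|<1$ we obtain $\{|\widehat{X}_\al|>2K\}\subset\{|G_\al|>K-1\}$. (In particular, on $\{|X_\al|\le K\}$ the event $\widehat{\cA}_\al$ can fail only inside $(\cQ_\al^L)^c\cup B_\al$, where $\widehat{X}_\al=\infty$.) Consequently
\[
\{|X_\al|\le K,\ |\widehat{X}_\al|>2K\}\subset\big(\{|X_\al|\le K\}\cap(\cQ_\al^L)^c\big)\cup\big(\{|X_\al|\le K\}\cap B_\al\big)\cup\big(\{|X_\al|\le K\}\cap\{|G_\al|>K-1\}\big).
\]
For the first set, $(\cQ_\al^L)^c\in\sigma(Q)$ is independent of $\{|X_\al|\le K\}\in\sigma(P)$ and $\bP((\cQ_\al^L)^c)\le 2e^{-L^2}$; for the third set, $\{|G_\al|>K-1\}$ is independent of $\sigma(P)$ by the observation above. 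Since Lemma \ref{lemma:probability_of_single_point_to_be_near_minima} gives $\sum_\al\bP(|X_\al|\le K)=2K\sqrt{\pi/3}+o(1)$, summing over $\al$ yields
\[
\sum_\al\bP\big(|X_\al|\le K,\ |\widehat{X}_\al|>2K\big)\le\big(2e^{-L^2}+\bP(|G|>K-1)\big)\big(2K\sqrt{\pi/3}+o(1)\big)+\sum_\al\bP\big(|X_\al|\le K,\ B_\al\big),
\]
with $G\sim\mathcal N_{\bR}(0,1/2)$. Taking $\limsup_{n\to\infty}$, then $L\to\infty$, then $K\to\infty$, and using $\bP(|G|>K-1)\le e^{-(K-1)^2}$, the first term tends to $0$, so the lemma follows once one shows $\sum_\al\bP(|X_\al|\le K,\ B_\al)=o(1)$.

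The main obstacle is exactly this last estimate. It would be false without the constraint $|X_\al|\le K$ — for instance $\bP\big(|P'(x_\al)|\in[n^{1-\e/2},(1+n^{-\e/4})n^{1-\e/2}]\big)$ summed over the $N$ values of $\al$ does not tend to $0$ — and the point is that conditioning on $\{|X_\al|\le K\}$ already makes it unlikely that $|P'(x_\al)|$ is atypically small (of order $n^{1-\e/2}\ll\sigma_n$), so the extra restriction imposed by $B_\al$ is genuinely expensive. I would prove $\sum_\al\bP(|X_\al|\le K,\ B_\al)=o(1)$ by repeating the Gaussian computation from the proof of Lemma \ref{lemma:probability_of_single_point_to_be_near_minima} with $|P'(x_\al)|$ — or $|Y_\al|$ — additionally restricted to the relevant strip: using, as there, the factorization of $\bP(\cA_\al'\cap\{Z_\al\in[-K,K]\})$ over the law of $w=P'(0)/\sigma_n$, one sees that this restriction costs a factor that is a negative power of $n$ beyond the $O(1/N)$ probability of $\{|X_\al|\le K\}$; the part of $B_\al$ near $|Y_\al|=\pi/N$ is handled exactly as in the proof of Claim \ref{claim:first_sum_in_seperatin_lemma_is_small}, and the part near the upper endpoint $C_0n\sqrt{\log n}$ follows from the Gaussian tail of $P'(x_\al)$ since $C_0$ is large. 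Summing over the $N$ values of $\al$ then gives the desired $o(1)$.
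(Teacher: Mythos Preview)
Your argument is correct and follows essentially the same route as the paper: a union bound over $\al$, with each term controlled by perturbation estimates comparing $(\widehat P(x_\al),\widehat P'(x_\al))$ to $(P(x_\al),P'(x_\al))$ on $\cA_\al$, together with the independence of $Q$ from $P$. The paper organizes the estimate slightly differently, splitting directly into $E_1=\{|X_\al|\le K,\ \widehat Y_\al\notin I_\al\}$ and $E_2=\{|X_\al|\le K,\ |\widehat X_\al|>2K,\ \widehat Y_\al\in I_\al\}$ and bounding $\bP(E_2)$ via $\bP(|X_\al|\le K)\cdot\bP(|Q(0)|\ge K/2)$, without introducing your buffer events $B_\al$; this is shorter but glosses over the verification that $\widehat{\cA}_\al''$ (and hence $\widehat X_\al=\widehat Z_\al$) holds on $E_2$, which your more explicit treatment via $B_\al$ does cover.
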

 	\begin{proof}
 		We turn to bound $\bP\left( |X_\al| \leq K, \ |\widehat{X}_\al| > 2K \right)$ for each $\al$, and assume by stationarity that $x_\al = 0$. We have,
 		\begin{align*}
 			\bP&\left( |X_\al| \leq K, \ |\widehat{X}_\al| > 2K \right) \\ &\leq \bP\left( |X_\al| \leq K, \ \what{Y}_\al \not \in I_\al\right) + \bP\left(|X_\al| \leq K, \ |\widehat{X}_\al| > 2K, \ \what{Y}_\al \in I_\al \right)  =: \bP(E_1) + \bP(E_2).
 		\end{align*}
 		To bound the probabilities $\bP(E_i)$, $i=1,2$ we exploit the relation between the polynomial and its small perturbation (\ref{eq:def_of_polynomial_with_perturbation}). Denote by
 		\[
 		{\begin{array}{cc}
 		F(x) = A+ Bx  \\ \widehat{F}(x) = \widehat{A}+ \widehat{B}x
 		\end{array}}
 		 \quad \text{where, } \quad 
 		 { \begin{array}{cc}
 		 	A = P_n(0), & B = P_n^\prime(0), \\ \widehat{A} = \widehat{P}_n(0),  & \widehat{B} = \widehat{P}_n^\prime(0).
 		 	\end{array}}
 		\]
 		We see that
 		\begin{align*}
 		\widehat{A} = \sqrt{1-\frac{1}{n^2}} A + \frac{1}{n} Q(0), \quad \widehat{B} = \sqrt{1-\frac{1}{n^2}} B + \frac{1}{n} Q^\prime(0).
 		\end{align*}
 		By Taylor expanding the square root and using the fact that $|A|\leq n^{-1/2}$ and $|B|\in \left[n^{1-\e/2}, C_0 n\sqrt{\log n} \right] $ on the event $\cA_\al$,
		we see that,
 		\begin{equation}
 	\label{eq:difference_in_linear_approximation_after_perturbation}
 			\left|A - \what{A} + \frac{Q(0)}{n} \right| \ll \frac{1}{n^{3/2}}, \qquad \left|B - \what{B}\right| \ll \frac{|Q^\prime(0)|}{n} + \frac{\sqrt{\log n}}{n}.
 		\end{equation}
 		Therefore, on the event $\cA_\al \cap \left\{|Q(0)|\leq n^\e , \ |Q^\prime(0)| \le n^{1+\e} \right\}$, we have that
		$|B| = |\what{B}|(1+o(1))$ and thus 
 		\begin{align*}
 			\left|Y_\al - \what{Y}_\al\right| &= \left| \frac{\re(A\wbar{B})}{|B|^2} - \frac{\re(\what{A}\wbar{\what{B}})}{|\what{B}|^2} \right| \\ & \ll \frac{\left|\re\left(A \wbar{Q^\prime}(0) + Q(0)\wbar{B} \right)\right|}{n|B|^2} \ll \frac{1}{n^{3/2-2\e}} + \frac{|Q(0)|}{n^{3-2\e}} \ll \frac{1}{n^{3/2 - 2\e}}.
 		\end{align*}
 		Therefore,
 		\begin{align}
 			\bP\left(E_1\right) 
			&\leq \bP\left(|X_\al|  \leq K , \ |Y_\al| \in \left[\frac{\pi}{N} - \frac{1}{n^{3/2 - 2\e}}, \frac{\pi}{N}\right]  \right)  \nonumber\\
			&\qquad + \bP\left(\left\{|Q(0)|\ge n^\e \right\} \cup \left\{|Q^\prime(0)| \ge n^{1+\e}  \right\} \right) \nonumber \\ \label{eq:bound_on_prob_of_E_1} &\ll \frac{1}{N n^{3/2 - 2\e}} + e^{ - n^\e}.
 		\end{align}
 		To bound $\bP(E_2)$, we use (\ref{eq:difference_in_linear_approximation_after_perturbation}) once more and see that on the event $E_2$,
 		\begin{align*}
 			\left|X_\al - \what{X}_\al\right| &= \left|\frac{\im(A \wbar{B})}{|B|} - \frac{\im(\what{A} \wbar{\what{B}})}{|\what{B}|} \right| \ll \frac{\left|\im\left( Q(0)\wbar{B}  \right)\right|}{|B|} \ll |Q(0)|,
 		\end{align*}
 		which implies that
 		\begin{align}
 			\label{eq:bound_on_prob_of_E_2}
 			\bP\left(E_2\right)& \leq \bP\left(|X_\al| \leq K , \ |Q(0)| \geq \frac{K}{2} \right)\\ \nonumber &= \bP\left(|X_\al|\leq K\right) \cdot \bP\left(|Q(0)| \geq K/2\right) \ll \frac{Ke^{-K^2/4}}{N}.
 		\end{align}
 		Combining the bounds (\ref{eq:bound_on_prob_of_E_1}) and (\ref{eq:bound_on_prob_of_E_2}) together with the union bound, we see that
 		\begin{align*}
 			\bP\left(\bigcup_{\al = 1}^{N} \left\{ |X_\al| \leq K, \ |\widehat{X}_\al| > 2K \right\} \right) &\leq N\bP\left( |X_\al| \leq K, \ |\widehat{X}_\al| > 2K \right) \\ &\ll \frac{1}{n^{3/2-2\e}} + Ke^{-K^2/4}.
 		\end{align*}
 	\end{proof}
 	\begin{proof}[Proof of Lemma \ref{lemma:limiting_pp_satisfy_invariance}]
 		Fix $f\in C_c(\bR)$, and assume that the support of $f$ is strictly contained in $(-K,K)$ for some $K>0$ (large enough). Let $\mathcal{F}_P$ denote the $\sigma$-algebra generated by the coefficients $\left\{\zeta_j \right\}_{j=-n}^{n}$ of the polynomial $P$.
		%Let $\mathcal{F}_P$ denote the $\sigma$-algebra generated by $\{P(x)\}_{x\in \bT}$. 
		We have that
 		\begin{align}
 			\nonumber
 			\bE\left[e^{-\avg{\eta,f}}\right] &= \lim_{k\to\infty} \bE\left[\exp\left(-\avg{\what{\cM}_{n_k},f} \right)\right] \\ &= \label{eq:conditional_expectation_for_linear_statistics} \lim_{k\to\infty} \bE\bigg[\bE\Big[\exp\Big(-\sum_{\al=1}^{N_k} f(\what{X}_\al)\Big) \mid 
			\mathcal{F}_P\Big] \bigg],
 		\end{align}
 		where here $N_k = 2\lfloor n_k^{2-\e}/2\rfloor$. From the estimates
		(\ref{eq:difference_in_linear_approximation_after_perturbation}), we see that outside of an event of $o(1)$ probability, we have that
		for all $\alpha \in [N]$,
 		\[
 		\what{X}_\al = X_\al + \frac{\re(Q(x_\al)\wbar{P^\prime(x_\al)})}{|P^\prime(x_\al)|} + \cO\left(\frac{1}{\sqrt{n}}\right),
 		\]
		and the error term is uniform in $\al$. Denote by $G_\al := \re(Q(x_\al)\wbar{P^\prime(x_\al)})/|P^\prime(x_\al)|$. Then, conditioned on $\mathcal{F}_P$, 
		the random variables $\{G_\al\}$ are jointly normal,
		and each $G_\alpha$ has law $\mathcal{N}_{\bR}(0,1/2)$. Using Lemmas \ref{lemma:points_in_the_process_are_separated} and \ref{lemma:points_in_both_processes_are_similar}, 
		we obtain that outside of an event of probability $o(1)$, we have for all $\al,\al^\prime\in\left\{ \al\in [N] :  |X_\al|\leq 2K \right\}$ that
		\begin{equation}
		  \label{eq-cov}
		  |\bE\left[G_\al G_{\al^\prime} \mid \mathcal{F}_P\right]| \ll \left|\bE\left[Q(x_\al) \wbar{Q(x_{\al^\prime})}\right]\right| = \left|r_n(x_\al-x_{\al^\prime})\right| 
		\ll \frac{1}{n^{1-\e}},
	      \end{equation}
	      where $r_n$ is again as in (\ref{eq:covariance_kernel_of_polynomial}). Putting everything together, we use Lemma \ref{lemma:almost_independent_gaussians_are_like_gaussians} and the uniform continuity of $f$ to see that
 		\begin{align*}
		  \bE\bigg[\exp\Big(-\sum_{\al=1}^{N_k} f(\what{X}_\al)\Big) \mid \mathcal{F}_P\bigg] &= e^{o(1)} \bE\bigg[\exp\Big(-\sum_{\al=1}^{N_k} f(X_\al + G_\al)\Big) \mid \mathcal{F}_P\bigg] \\ &= e^{o(1)} \exp\Big(-\sum_{\al=1}^{N_k} \ti{f}(X_\al)\Big)
 		\end{align*}
 		where the $o(1)$ term may be random (measurable on $\mathcal{F}_P$, but still, it is of order $o(1)$ with probability approaching $1$ as $n\to\infty$.)
	      Plugging into (\ref{eq:conditional_expectation_for_linear_statistics}) and using (\ref{eq:subsequential_convergence_of_both_processes}) we get that
 		\begin{equation*}
 			\bE\left[e^{-\avg{\eta,f}}\right] = \lim_{k\to\infty} \bE\Big[\exp\Big(-\sum_{\al=1}^{N} \ti{f}(X_\al)\Big)\Big] = \bE\big[ e^{-\langle\eta,\ti{f}\rangle}\big]
 		\end{equation*}
 		as desired.
 	\end{proof}
 	Finally, we extract the Poisson limit from relation (\ref{eq:liggett_property_for_pp}) and with that the proof of Theorem \ref{thm:limiting_distribution_of_minimum}.
 	\begin{proposition}
 		\label{prop:invariance_property_implies_poisson}
 		Suppose that $\eta$ is a point process on $\bR$ such that 
		(\ref{eq:liggett_property_for_pp}) holds for 
		all non-negative $f\in C_b(\bR)$. Then $\eta$ is a Poisson point process whose intensity measure $\mu$ is a constant multiple of the Lebesgue measure.
	\end{proposition}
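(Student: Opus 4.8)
\emph{Step 1: the hypothesis \eqref{eq:liggett_property_for_pp} is an exact distributional invariance.} Let $\theta$ denote the random transformation sending a configuration $\eta=\sum_i\delta_{x_i}$ to $\theta\eta:=\sum_i\delta_{x_i+G_i}$, with $(G_i)$ i.i.d. $\mathcal N_\bR(0,1/2)$ independent of $\eta$. Conditioning on $\eta$,
\[
\bE\big[e^{-\langle\theta\eta,f\rangle}\mid\eta\big]=\prod_i\bE_G\big[e^{-f(x_i+G)}\big]=\prod_i e^{-\ti f(x_i)}=e^{-\langle\eta,\ti f\rangle},
\]
so \eqref{eq:liggett_property_for_pp} says precisely $\bE[e^{-\langle\theta\eta,f\rangle}]=\bE[e^{-\langle\eta,f\rangle}]$ for all nonnegative $f\in C_b(\bR)$. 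Since Laplace functionals over nonnegative $f\in C_c(\bR)\subset C_b(\bR)$ determine the law of a point process, this gives $\theta\eta\stackrel{d}{=}\eta$; iterating, $\theta^k\eta\stackrel{d}{=}\eta$ for every $k\ge 1$, where $\theta^k$ displaces each atom independently by $\mathcal N_\bR(0,k/2)$. Equivalently, $\eta$ is an invariant point process for the system of independent Brownian particles on $\bR$ observed along the times $\tfrac12\bN$.

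\emph{Step 2: intensity and Liggett's characterization.} Testing \eqref{eq:liggett_property_for_pp} against $f=sg$ with $g\in C_c(\bR)$ nonnegative and comparing the $O(s)$ terms as $s\downarrow 0$ gives $\int g\,d\mu=\int (g*\varphi)\,d\mu$, where $\varphi$ is the (even) density of $\mathcal N_\bR(0,1/2)$ and $\mu$ is the intensity measure of $\eta$; hence $\mu=\mu*\varphi$, and a locally finite nonnegative measure with at most polynomial growth that is fixed by Gaussian convolution is a constant multiple of Lebesgue measure (Choquet–Deny, or a Fourier argument on the resulting tempered distribution). By Corollary \ref{cor:limit_intensity_of_point_process} the constant is $\sqrt{\pi/3}$. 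Next, by Liggett's characterization of the invariant point processes of independent‑particle systems \cite{liggett} — whose engine is precisely that the $\mathcal N_\bR(0,1/2)$ random walk on $\bR$ has only constant bounded harmonic functions, so its invariant $\sigma$‑finite measures are the multiples of Lebesgue measure — the processes invariant under $\{\theta^k\}_{k\ge1}$ form a Choquet simplex whose extreme points are exactly the homogeneous Poisson processes $\mathrm{PPP}(a\,dx)$, $a\in[0,\infty)$. Therefore $\eta$ is a mixture $\int_{[0,\infty)}\mathrm{PPP}(a\,dx)\,\rho(da)$ with $\int a\,\rho(da)=\sqrt{\pi/3}$. (The mixture structure can also be obtained by hand: $\theta^k$‑invariance reads $\bE[e^{-\langle\eta,f\rangle}]=\bE[e^{-\langle\eta,f_k\rangle}]$ with $1-e^{-f_k}=(1-e^{-f})*\varphi_{k/2}$, and letting $k\to\infty$ sends $\langle\eta,f_k\rangle$ in law to $A\int(1-e^{-f})$, where $A$ is the asymptotic density $\lim_{R\to\infty}\eta([-R,R])/2R$.)

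\emph{The main obstacle.} The substantial step — the one \emph{not} automatic from \eqref{eq:liggett_property_for_pp} alone, since every mixture $\int\mathrm{PPP}(a\,dx)\rho(da)$ also satisfies \eqref{eq:liggett_property_for_pp} — is to collapse the mixture to a single $\mathrm{PPP}(c\,dx)$, i.e. to show that the asymptotic density $A$ is deterministic, equivalently that $\eta$ is ergodic under spatial translations of $\bR$. Note that translation invariance of $\eta$ is an \emph{output} of Steps 1--2, not an a priori property, because $\eta$ lives on the "near‑minima values" axis. In the situation at hand one resolves this by returning to the approximating processes $\cM_n$: contributions of arcs $I_\alpha$ at macroscopic distance are asymptotically independent, since $r_n(x_\alpha-x_{\alpha'})$ is negligible once $|x_\alpha-x_{\alpha'}|\gg 1/n$ (cf. Lemma \ref{lemma:points_in_the_process_are_separated}), and this forces the global density of the limiting process to be non‑random. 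Feeding this triviality in gives $\rho=\delta_{\sqrt{\pi/3}}$, so $\eta=\mathrm{PPP}(\sqrt{\pi/3}\,dx)$; establishing this ergodicity/tail‑triviality is where the real work lies, while Steps 1--2 are essentially bookkeeping plus a citation.
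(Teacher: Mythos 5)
Your Steps 1--2 follow exactly the paper's route: read \eqref{eq:liggett_property_for_pp} as invariance of the law of $\eta$ under independent $\mathcal{N}_{\bR}(0,1/2)$ displacements of its atoms, invoke Liggett's characterization \cite{liggett} to write $\eta$ as a mixture of Poisson processes whose intensities are invariant under convolution with $\mathcal{N}_{\bR}(0,1/2)$, and use Choquet--Deny/Deny to identify those intensities as constant multiples of Lebesgue measure (the paper applies Deny's theorem directly, so no growth hypothesis on the intensity is needed; your Fourier variant needs local boundedness of the intensity, which does hold here but is an extra step, and your separate first-moment linearization is redundant once Liggett is invoked). The divergence is at the final step, and there your criticism is accurate: the paper's proof ends by asserting that a convex combination of constant-intensity Poisson processes is again a Poisson process, which is false as stated --- such a mixture is a Cox process directed by $A\cdot\mathrm{Lebesgue}$ with random $A$, its hole probability is $\bE\left[e^{-2\tau A}\right]$, and, as you observe, every such mixture satisfies \eqref{eq:liggett_property_for_pp}. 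So the Proposition cannot be deduced from its stated hypothesis alone; what follows from Liggett plus Deny is only that $\eta$ is a mixture of homogeneous Poisson processes, and by Jensen the mixture must be collapsed to a point mass before the exponential law in Theorem \ref{thm:limiting_distribution_of_minimum} can be concluded.

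That said, as a proof of the Proposition your submission is also incomplete: the collapse of the mixture is only gestured at. The ingredient you propose (asymptotic independence of contributions from macroscopically separated arcs) is external to the Proposition --- it uses the origin of $\eta$ as a limit of $\cM_n$ --- and Lemma \ref{lemma:points_in_the_process_are_separated} does not supply it, since that lemma only excludes pairs of candidate arcs at distance at most $n^{-\e}$, a microscopic statement. To make your patch rigorous one would, for instance, carry out a two-arc analogue of Lemma \ref{lemma:probability_of_single_point_to_be_near_minima} at macroscopic separations (using $|r_n(x)|\ll 1/(n|x|)$) together with suitable uniform integrability, showing $\bE\left[\cM_n(I)\big(\cM_n(I)-1\big)\right]\to \big(\sqrt{\pi/3}\,|I|\big)^2$; for a Cox limit this forces the variance of $A$ to vanish, hence $A$ deterministic and equal to $\sqrt{\pi/3}$ by Corollary \ref{cor:limit_intensity_of_point_process}. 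Alternatively one can prove tail-triviality/ergodicity of $\eta$ under translations directly. Either way this is a genuine additional argument, present neither in your write-up nor in the paper's own proof of the Proposition; your identification of exactly where the stated hypothesis stops being sufficient is the valuable part of your proposal.
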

	\begin{proof}
	  %This argument is almost identical to the one that appears in \cite{liggett}. 
	  From (\ref{eq:liggett_property_for_pp}) we know that the law of $\eta$ is an invariant measure for the transformation that
	  consists of adding to each point in the support of
	  $\eta$ an independent mean-zero Gaussian variable of variance $1/2$. Therefore, by \cite[Theorem 4.11]{liggett}, the law of $\eta$ is a mixture of Poisson processes
		of intensities $\mu$ that  satisfy the relation  
		\begin{equation}
		  \label{eq-mu}
		  \mu \star \mathcal{N}_{\bR}(0,1/2) = \mu
		\end{equation}
		where $\star$ denotes the convolution of two measures. By the result of Deny \cite[Theorem~3']{deny} (based on Choquet-Deny \cite{choquet_deny}),
		%By the Choquet-Deny theorem \cite[Theorem~3]{choquet_deny}
		we know that any solution of (\ref{eq-mu}) is of the form 
		\[
		\mu = \left(\int_{\bR} e^{-\rho x} d\nu(\rho) \right) dx
		\]
		where $\nu$ is a measure supported on those exponential functions $e_\rho(x):= e^{-\rho x}$ which satisfy $e_\rho\star \mathcal{N}(0,1/2) = e_{\rho}$. A straight forward computation shows that,
		\[
		 1 = e^{\rho x} \int_{\bR} e^{-\rho (x-y)} e^{-y^2} \frac{dy}{\sqrt{\pi}} = e^{\rho^2/4}
		\] 
		which in turn implies that $\rho =0$. Thus, we conclude that the measure $\nu$ is a constant multiple of a delta point mass at $\rho =0$. That is, $\mu$ is some multiple of the Lebesgue measure on $\bR$. Since convex combinations of Poisson processes with constant intensity yield a Poisson process of some (constant) intensity, we conclude that $\eta$ is a Poisson process with a constant intensity, which proves the proposition.
  	\end{proof}
 	\begin{proof}[Proof of Theorem \ref{thm:limiting_distribution_of_minimum}]
 		By Proposition \ref{prop:invariance_property_implies_poisson}, we know that $\{\cM_n\}$ converges on a subsequence
		to a Poisson process with intensity that is a multiple of the Lebesgue measure. By Corollary \ref{cor:limit_intensity_of_point_process}, we know that the limit of the intensity of $\cM_n$ is $\sqrt{\pi/3}$ times the Lebesgue measure. Since the limiting process does not depend on the subsequence, we use the tightness once more and conclude that $\cM_n$ converge to a Poisson point process with this given intensity. It remains to apply Lemma \ref{lemma:extremal_process_captures_minimum} and see that
 		\[
 		\lim_{n\to\infty} \bP\left(m_n \ge \frac{\tau}{n}  \right) = \lim_{n\to\infty} \bP\left(\cM_n((-\tau,\tau))\right) = \exp\left(-2\sqrt{\frac{\pi}{3}}\tau \right). 
 		\]
 	\end{proof}
 	
 	\section{Real Gaussian coefficients}
 	\label{sec:real_gaussian}
 	In this section we briefly comment on the analogous result to Theorem \ref{thm:limiting_distribution_of_minimum} in the case of real Gaussian coefficients. Let $\{X_j\}$ be an i.i.d. sequence of $\mathcal{N}_{\bR} (0,1)$ random variables and consider the random trigonometric polynomial given by
	\begin{align}
		\label{eq:definition_of_real_polynomial}
 		T_{n}(x) &:= \frac{1}{\sqrt{2n+1}} \sum_{j=-n}^{n} X_j e^{ijx} \\ &= \frac{1}{\sqrt{2n+1}} \left(\sum_{j=-n}^{n} X_j \cos(jx) + i\sum_{j=-n}^{n} X_j \sin(jx) \right) =: R_n(x) + i I_n(x). \nonumber
 	\end{align}
 	As before, we denote by $m_n(T) = \min_{x\in \bT} |T_n(x)|$.
 	\begin{theorem}
 		\label{thm:real_gaussian_case}
 		For any $\tau>0$ we have that
 		\[
 		\lim_{n\to\infty} \bP\left(m_n(T) \ge \frac{\tau}{n}\right) = e^{-\lambda \tau}
 		\]
 		where $\lambda= 2\sqrt{\pi/3}$.
 	\end{theorem}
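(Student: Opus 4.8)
The plan is to mirror the entire argument developed for the complex case, keeping track of the two real places where complex Gaussianity entered: the ``high-school'' computation of the minimum of a linear approximation, and the Gaussian density bounds in the separation lemma. First I would set up the analogous net $x_\al = 2\pi\al/N$, $N = 2\lfloor n^{2-\e}/2\rfloor$, and replace the pair $(P(x_\al), P'(x_\al))$ by the four real numbers $(R_n(x_\al), I_n(x_\al), R_n'(x_\al), I_n'(x_\al))$, equivalently by the two planar vectors $A_\al = (R_n(x_\al), I_n(x_\al))$ and $B_\al = (R_n'(x_\al), I_n'(x_\al))$. The high-school exercise of Section \ref{subsec-highschool} applies verbatim: on $I_\al$ the curve $x\mapsto (R_n(x),I_n(x))$ is approximated by the line $A_\al + (x-x_\al)B_\al$, whose distance from the origin is $|\langle A_\al, B_\al^\perp\rangle|/|B_\al|$, attained at $Y_\al = -\langle A_\al,B_\al\rangle/|B_\al|^2$. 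So I would define $Y_\al$ and $Z_\al := n\,\langle A_\al, B_\al^\perp\rangle/|B_\al|$ exactly as before, define $\cA_\al$, $\cA_\al'$, $\cA_\al''$ by the same formulas (with $|P(x_\al)|$ read as $|A_\al|$ and $|P'(x_\al)|$ as $|B_\al|$), and form $\cM_n = \sum_\al \delta_{Z_\al}\indf{\cA_\al}$.

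Next I would re-run Lemma \ref{lemma:second_derivative_is_small} (the second-derivative bound is a statement about the real trigonometric polynomials $R_n'', I_n''$, which are again Gaussian with variance $\tfrac{1}{10}+o(1)$ after the $n^{-2}$ scaling, so the same Markov/Bernstein argument gives $\bP(\cG_\beta^c)\ll e^{-n^\beta}$), and then the single-point computation of Lemma \ref{lemma:probability_of_single_point_to_be_near_minima}. Here the key inputs are that, by stationarity at $x_\al=0$, the vector $(R_n(0),I_n(0))$ is standard two-dimensional Gaussian, $(R_n'(0),I_n'(0))$ is Gaussian with covariance $\sigma_n^2 \Id$ where $\sigma_n^2 = n(n+1)/3$, and the two pairs are independent (the cross-covariances $\bE[R_n(0)R_n'(0)]$ etc.\ all vanish because $r_n'(0)=0$). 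So $A$ and $B/\sigma_n$ are independent standard Gaussian vectors in $\bR^2$, and the computation $\bP(|Y_\al|\le \pi/N, Z_\al\in[a,b])$ reduces, after conditioning on $B=w$, to $\bP(|\langle A, w\rangle/|w||\le \pi\sigma_n/N)\cdot\bP(\langle A, w^\perp\rangle/|w| \in [a/n,b/n])$; by rotation invariance of $A$ this is $\tfrac{2\pi\sigma_n}{N}\cdot\tfrac{b-a}{n}\cdot\tfrac{1}{2\pi}\int e^{-|w|^2/2}\cdots$, and one must recompute the constant to confirm it is again $\sqrt{\pi/3}\,(b-a)/N$. The disposal of $(\cA_\al'')^c$ and Corollaries \ref{cor:limit_intensity_of_point_process}--\ref{cor:tightness_of_point_process} and Lemma \ref{lemma:extremal_process_captures_minimum} then go through with only notational changes, since they only used Taylor expansion, the triangle inequality, and the single-point estimate.

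For the separation Lemma \ref{lemma:points_in_the_process_are_separated}, Claim \ref{claim:first_sum_in_seperatin_lemma_is_small} is again purely a deterministic comparison of adjacent linear interpolations on $\cG_\beta$ plus the single-point estimate, so it transfers directly. Claim \ref{claim:second_sum_in_seperatin_lemma_is_small} is the one place needing genuine rework: one forms the real Gaussian vector $\bigl(R_n(0),I_n(0),R_n'(0)/\sigma_n,I_n'(0)/\sigma_n, R_n(x_\al),\ldots\bigr)$ and bounds its density by the inverse determinant of its covariance matrix, which by the block structure factors through $\det(\Id-\Sigma_\al)\det(\Id+\Sigma_\al)$ with the same $2\times 2$ matrix $\Sigma_\al$ built from $r_n, r_n', r_n''$ as in the text; the Taylor expansions \eqref{eq:taylor_expansion_for_correlations} are unchanged, giving $\det(\Id-\Sigma_\al) = (nx_\al)^2/3 + \cO(|nx_\al|^4)$, and hence the density bound $\ll (nx_\al)^{-8}$ for $n^{-1-2\e}\le|x_\al|\le\e/n$ and a uniform constant for $|x_\al|\ge\e/n$, so the sum estimate $S_{\RomanNumeralCaps 2} = o(1)$ survives (the real case has exponent $-4$ on a determinant that is the square of the complex one's, so the powers match). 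Finally, for Section \ref{sec-liggett} I would again write $\widehat{T}_n = \sqrt{1-n^{-2}}\,T_n + n^{-1} S_n$ with $S_n$ an independent copy, check that $\widehat{X}_\al = X_\al + G_\al + \cO(n^{-1/2})$ with $G_\al = \langle (R_S(x_\al),I_S(x_\al)), B_\al^\perp\rangle/|B_\al|$ conditionally Gaussian given $\cF_T$ with law $\cN_\bR(0,1/2)$ and conditional covariances $\ll |r_n(x_\al - x_{\al'})|\ll n^{-1+\e}$ on the relevant index set, apply Lemmas \ref{lemma:almost_independent_gaussians_are_like_gaussians}, \ref{lemma:points_in_both_processes_are_similar}, \ref{lemma:limiting_pp_satisfy_invariance}, and Proposition \ref{prop:invariance_property_implies_poisson} verbatim, and conclude via Lemma \ref{lemma:extremal_process_captures_minimum} that $\bP(m_n(T)\ge\tau/n)\to e^{-2\sqrt{\pi/3}\,\tau}$. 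I expect the only real obstacle to be bookkeeping: verifying that the $2$-dimensional real Gaussian constant in the single-point computation still equals $\sqrt{\pi/3}$ and that the determinant powers in Claim \ref{claim:second_sum_in_seperatin_lemma_is_small} line up — everything else is a faithful translation of the complex proof.
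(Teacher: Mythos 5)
There is a genuine gap: your argument assumes that $T_n$ is stationary (``by stationarity, assume $x_\al=0$''), and this is exactly the point where the real case differs from the complex one. From \eqref{eq:correlations_in_the_real_case}, $\bE[R(x)^2]=\tfrac{1+r_n(2x)}{2}$ and $\bE[I(x)^2]=\tfrac{1-r_n(2x)}{2}$, so the law of $(R_n(x_\al),I_n(x_\al))$ depends on $x_\al$; in particular at $x_\al=0$ (and near $\pi$) the vector is degenerate --- $I_n(0)\equiv 0$ and $R_n'(0)\equiv 0$ --- so your claim that $(R_n(0),I_n(0))$ is a standard planar Gaussian independent of $(R_n'(0),I_n'(0))/\sigma_n$ is false, and the single-point computation cannot be reduced to the origin. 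The correct statement is that $T(x_\al)$ behaves like a standard complex Gaussian only for $|x_\al|\in[n^{-1+\e},\pi-n^{-1+\e}]$, where $r_n(2x_\al)=o(1)$ and the cross-covariances (which equal $\pm r_n'(2x_\al)/2$, not $0$) are $o(\sigma_n)$. Consequently your proposal is missing an essential ingredient: a separate argument showing that, with probability tending to $1$, the neighborhoods $\{|x|\le n^{-1+\e}\}\cup\{|x-\pi|\le n^{-1+\e}\}$ contribute no near-minimum at scale $\tau/n$. The paper supplies this as Lemma \ref{lemma:minimum_does_not_occur_inside_small_interval_real_coefficients}, whose proof is not a translation of anything in the complex case (it uses a fine net, the derivative bound, and the lower bound $\Lambda_\ell\gg n^{-4\e}$ on the variance of the imaginary part to show $\min|T|\ge \log n/n$ there); without it the limit law could in principle be spoiled by the nearly-real behavior of $T_n$ near $0$ and $\pi$.

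A second, smaller consequence of the same oversight appears in your treatment of Claim \ref{claim:second_sum_in_seperatin_lemma_is_small}: because of non-stationarity the covariance matrix of $\bigl(T_n(x_{\al'}),T_n'(x_{\al'}),T_n(x_\al),T_n'(x_\al)\bigr)$ is not built from ``the same $\Sigma_\al$''; it contains additional $r_n(x_\al+x_{\al'})$-type entries, and one cannot fix $\al'=0$. One must check that these extra terms are negligible compared with the determinant scale $(n|x_\al-x_{\al'}|)^2$, which is what the paper's estimate $r_n(x+y)\ll \frac{1}{n|x|}+|x-y|=o\bigl((n|x-y|)^2\bigr)$ (valid for $|x|\ge n^{-1+\e}$, $n^{-1-2\e}\le|x-y|\le n^{-\e}$) accomplishes. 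The remainder of your outline --- the high-school reduction, the second-derivative bound, tightness, the approximation lemma, and the Liggett/Biskup--Louidor invariance step --- does transfer as you describe, but as written the proof does not go through without the two non-stationarity repairs above.
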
 
 	The proof the Theorem \ref{thm:real_gaussian_case} is almost identical to that of Theorem \ref{thm:limiting_distribution_of_minimum}, only the computations are more cumbersome. The reason for this complication is that the polynomial $T_n$ is no longer stationary (as opposed to $P_n$). Still, $T_n$ is a complex-valued Gaussian process on $\bT$ so computations are possible.
 	
 	The correlations of the real and imaginary part of $T=T_n$ are given by
 	\begin{align}
 		\label{eq:correlations_in_the_real_case}
 		&\bE\left[R(x) R(y)\right] = \frac{1}{2n+1} \sum_{j=-n}^{n} \cos(jx)\cos(jy) = \frac{r_n(x-y) + r_n(x+y)}{2} \nonumber \\ & \bE\left[I(x) I(y)\right] = \frac{1}{2n+1} \sum_{j=-n}^{n} \sin(jx)\sin(jy) = \frac{r_n(x-y) - r_n(x+y)}{2} \\ & \bE\left[R(x) I(y)\right] = \frac{1}{2n+1} \sum_{j=-n}^{n} \cos(jx)\sin(jy) = 0, \nonumber
 	\end{align}
 	where $r_n$ is given by (\ref{eq:covariance_kernel_of_polynomial}) and $x,y\in \bT$. Recall the definition of the event that the interval $I_\al$ produced a candidate for a minimal value (\ref{eq-defA}). It follows from (\ref{eq:correlations_in_the_real_case}) that as long as $|x_\al|\in [n^{-1+\e}, \pi - n^{-1+\e}] $ then
 	\[
 	\bE\left[R(x_\al)^2\right] = \frac{1}{2} + o(1) \quad \text{and } \quad \bE\left[I(x_\al)^2\right] = \frac{1}{2} + o(1).
 	\]
 	That is, the random variable $T(x_\al)$ scale as a standard complex Gaussian and similar computations as in Lemma \ref{lemma:probability_of_single_point_to_be_near_minima} can be carried out with no problems. Still, we need to show that with probability tending to 1 the minimum of $T$ does not occur inside $\left\{|x|\leq n^{-1+\e}\right\} \cup \left\{|x-\pi|\leq n^{-1+\e}\right\} $, this we do in Lemma \ref{lemma:minimum_does_not_occur_inside_small_interval_real_coefficients} below.
 	
 	In proving that the points of the extremal process are obtained from well separated intervals (i.e. the analogous result to Lemma \ref{lemma:points_in_the_process_are_separated}), we remark that proving Claim \ref{claim:first_sum_in_seperatin_lemma_is_small} for the real coefficients case is straight forward. To prove Claim \ref{claim:second_sum_in_seperatin_lemma_is_small} for the real case, one can use (\ref{eq:correlations_in_the_real_case}) while noticing that
 	\begin{equation*}
 		r_n(x+y) \ll r_n(2x) + |x-y|r_n^\prime (2x) \ll \frac{1}{n|x|} + |x-y| = o\left( (n|x-y|)^2\right)
 	\end{equation*} 
 	provided that $|x|\ge n^{-1+\eps}$ and that $n^{-1-2\eps}\leq |x-y|\le n^{-\eps}$. The proof of Liggett's invariance (Section \ref{sec-liggett}) also translates to the case of real coefficients with no problems.
 	
 	It remains to prove that with high probability the intervals $$\left\{|x|\leq n^{-1+\e}\right\} \cup \left\{|x-\pi|\leq n^{-1+\e}\right\}$$ will not contribute a point to the extermal process that corresponds to $T$. Since $T(x)$ and $T(x+\pi)$ have the same distribution, it suffices to consider the interval centered around $0$.
 	\begin{lemma}
 		\label{lemma:minimum_does_not_occur_inside_small_interval_real_coefficients}
 		Let $T = T_n$ be given as (\ref{eq:definition_of_real_polynomial}), then
 		\[
 		\lim_{n\to\infty }\bP\left(\min_{|x|\leq n^{-1+\e}}|T(x)| \leq \frac{\log n}{n}\right) = 0.
 		\] 
 	\end{lemma}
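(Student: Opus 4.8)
The point requiring care here, in contrast with the regime $|x_\al|\in[n^{-1+\e},\pi-n^{-1+\e}]$ where $T$ behaves like a standard complex Gaussian, is that $T_n$ degenerates at the origin: since $\sin 0=0$ we have $I_n(0)=0$, so on a neighbourhood of $0$ the modulus $|T_n(x)|$ is governed only by the real part, which is essentially $R_n(0)\sim\mathcal{N}_{\bR}(0,1)$. The plan is to cover the whole interval at once by a fine net. Set $\de_n:=\log n/n$ and put $x_k:=k/n^2$ for $|k|\le n^{1+\e}$, so that the intervals $[x_k-\tfrac{1}{2n^2},x_k+\tfrac{1}{2n^2}]$ cover $[-n^{-1+\e},n^{-1+\e}]$. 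First one shows, by the argument of Lemma~\ref{lemma:second_derivative_is_small} applied to the real and imaginary parts of $T_n'$, that $\bP\big(\|T_n'\|_\infty>C_1 n\sqrt{\log n}\big)=o(1)$ for a suitable constant $C_1$; when $\|T_n'\|_\infty\le C_1 n\sqrt{\log n}$ one has $|T_n(x)-T_n(x_k)|\le C_1\sqrt{\log n}/(2n)\le\de_n$ for $x$ in the $k$th interval and $n$ large. It therefore suffices to prove that
\[
\sum_{|k|\le n^{1+\e}}\bP\big(|T_n(x_k)|\le 2\de_n\big)=o(1).
\]

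For the pointwise estimate, recall from \eqref{eq:correlations_in_the_real_case} that $R_n(x)$ and $I_n(x)$ are \emph{independent} centred real Gaussians, with variances $\si_R(x)^2=\tfrac12\big(1+r_n(2x)\big)$ and $\si_I(x)^2=\tfrac12\big(1-r_n(2x)\big)$. Using \eqref{eq:taylor_expansion_for_correlations} for small $|x|$ and the crude bound $|r_n(2x)|\le\big((2n+1)|\sin x|\big)^{-1}$ for the larger ones --- equivalently, $r_n(2x)=\tfrac{\sin((2n+1)x)}{(2n+1)x}+o(1)$ uniformly on $|x|\le n^{-1+\e}$ --- one obtains an absolute constant $c>0$ with
\[
\si_R(x)^2\ge c\quad\text{and}\quad\si_I(x)^2\ge c\min\big((nx)^2,1\big)\qquad\text{for }|x|\le n^{-1+\e}.
\]
Hence, by independence of $R_n(x_k)$ and $I_n(x_k)$ together with the anticoncentration bound $\bP\big(|\mathcal{N}_{\bR}(0,\si^2)|\le t\big)\le t/\si$,
\[
\bP\big(|T_n(x_k)|\le 2\de_n\big)\le\bP\big(|R_n(x_k)|\le 2\de_n\big)\,\bP\big(|I_n(x_k)|\le 2\de_n\big)\ll\frac{\de_n^2}{\min\big(n|x_k|,1\big)}\quad(x_k\neq 0),
\]
while trivially $\bP\big(|T_n(x_k)|\le 2\de_n\big)\ll\de_n$ for every $k$.

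It then remains to sum over the net, splitting according to $|x_k|=|k|/n^2$. For $|k|\le\log n$ one uses the bound $\ll\de_n$; there are $\cO(\log n)$ such terms, contributing $\cO\big((\log n)^2/n\big)$. For $\log n<|k|\le n$ one has $\min(n|x_k|,1)=|k|/n$, so the summands are $\ll\de_n^2 n/|k|$, and since $\sum_{|k|\le n}1/|k|\ll\log n$ their total is $\ll\de_n^2 n\log n=\cO\big((\log n)^3/n\big)$. For $n<|k|\le n^{1+\e}$ each summand is $\ll\de_n^2$ and there are $\cO(n^{1+\e})$ of them, contributing $\cO\big(n^{\e-1}(\log n)^2\big)$. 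All three bounds are $o(1)$ because $\e<1$, which would give the lemma.

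The step I expect to be the only real obstacle --- and the one place where the degeneracy of $T_n$ at $x=0$ actually intervenes --- is the middle range above: if one estimated $\bP\big(|T_n(x_k)|\le 2\de_n\big)$ merely by $\bP\big(|I_n(x_k)|\le 2\de_n\big)\ll\de_n/\si_I(x_k)\asymp\log n/|k|$, the resulting sum $\sum_{|k|\le n}\log n/|k|\asymp(\log n)^2$ would diverge. It is essential to retain the second anticoncentration factor --- legitimate precisely because, at $x_k\neq 0$, $R_n(x_k)$ is an independent, non-degenerate Gaussian --- which supplies the extra $\de_n$ and converts this borderline harmonic divergence into the convergent $\de_n^2 n\log n$. (An alternative would be to dispose of the range $|x|\le n^{-1-\e/4}$ by hand, using $I_n(x)=I_n'(0)x+\cO\big(x^2\|I_n''\|_\infty\big)$ together with the with-high-probability lower bound $|I_n'(0)|\ge n^{1-\e/4}$, and run the net argument only on the complement; but the uniform treatment above is cleaner.)
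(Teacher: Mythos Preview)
Your proof is correct. The ingredients are the same as the paper's---a derivative bound to pass to a net, the independence of $R_n(x)$ and $I_n(x)$, and the variance lower bound $\si_I(x)^2\gg\min((nx)^2,1)$---but the implementation differs. The paper first peels off the innermost interval $|x|\le n^{-1-2\e}$ by comparing directly to $T(0)\sim\mathcal{N}_\bR(0,1)$, and then covers only the annulus $n^{-1-2\e}\le|x|\le n^{-1+\e}$ with a \emph{coarse} net of spacing $n^{-5/4}$ (so $\cO(n^{1/4+\e})$ points), at the cost of a larger small-ball radius $n^{-1/4+\e}$; the resulting sum is just a uniform bound times the number of points. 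You instead run a single \emph{fine} net of spacing $n^{-2}$ over the whole interval, keep the sharp small-ball radius $2\de_n$, and split the sum into three ranges; the near-zero region is absorbed by using only the $R$-factor there. Your version avoids the two-scale decomposition and makes the role of both anticoncentration factors explicit, while the paper's coarser net keeps the bookkeeping shorter by not having to sum a harmonic series. Either choice works.
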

 	\begin{proof}
 		By repeating the exact same argument as in Lemma \ref{lemma:second_derivative_is_small}, we can show that for any $\e>0$
 		\[
 		\bP\left(\norm{T^\prime}_\infty \ge n^{1+\e} \right) \ll \exp\left(-n^{\e}\right).
 		\]
 		We cover the interval $\left\{n^{-1-2\e} \leq |x|\leq n^{-1+\e} \right\}$ by non-overlapping intervals $\left\{J_\ell\right\}_{\ell = 1}^{L}$ so that $|J_\ell| \leq n^{-5/4}$, and let $y_\ell\in J_\ell$ be some point. Notice that $L\leq n^{1/4+\e}$. On the event $\left\{\norm{T^\prime}_\infty \le n^{1+\e}\right\}$, we know by Lagrange's mean value theorem that
 		\[
 		\left|T(x) - T(y_\ell)\right| \leq n^{-5/4} \norm{T^\prime}_{\infty} \leq n^{-1/4+\e}, \quad \text{for all } x\in J_\ell.
 		\]
 		Thus, on the event $\left\{\min_{x\in J_\ell} |T(x)| \leq \frac{\log n}{n} \right\} \cap \left\{ \norm{T^\prime}_\infty \le n^{1+\e} \right\}$, we know that
 		\begin{equation}
 			\label{eq:real_case_bound_on_point_if_minimum_occur}
 			|T(y_\ell)| \leq \frac{\log n}{n} + n^{-5/4} n^{1+\e} \le 2n^{-1/4+\e}
 		\end{equation}
 		for $n$ large enough. Recall from (\ref{eq:correlations_in_the_real_case}) that $T(y_\ell) = (R(y_\ell) ,I(y_\ell))$ is a bivariate mean-zero Gaussian with correlations given by
 		\[
 		\begin{bmatrix}
 		\frac{1}{2n+1} \sum_{j=-n}^{n} \cos^2(jy_\ell) & 0 \\ 0 & \frac{1}{2n+1} \sum_{j=-n}^{n} \sin^2(jy_\ell) 
 		\end{bmatrix} =: \begin{bmatrix}
 		1-\Lambda_\ell & 0 \\ 0 & \Lambda_\ell 
 		\end{bmatrix}.
 		\]  		
 		For $|n y_\ell| \leq \pi/2$ we have that
 		\begin{align}
		  \label{eq-Lam}
 			\Lambda_\ell \geq \frac{4y_\ell^2}{\pi^2(2n+1)} \sum_{j=-n}^{n} j^2 \gg n^{-2-4\e} n^2 \gg n^{-4\e}. 
 		\end{align}
		Moreover, for $|ny_\ell| \leq \pi/2$, the density of $T(y_\ell)$ is uniformly bounded from above. Using (\ref{eq:real_case_bound_on_point_if_minimum_occur}) and (\ref{eq-Lam}), we see that
 		\begin{align*}
 			\bP\left(\min_{|x|\leq n^{-1+\e}}|T(x)| \leq \frac{\log n}{n}\right) & \leq \bP\left(\min_{|x|\leq n^{-1-2\e}} |T(x)| \leq \frac{\log n}{n},\  \norm{T^\prime}_\infty \le n^{1+\e}\ \right) \\ &\quad + \sum_{\ell =1}^{L} \bP\left(|T(y_\ell)| \leq 2 n^{-1/4+\e}\right) + \bP\left(\norm{T^\prime}_\infty \ge n^{1+\e} \right) \\ & \ll \bP\left(|T(0)|\leq 2n^{-\e}\right) + n^{-1/4+7\e} +e^{-n^\e} = o(1).
 		\end{align*}
 	\end{proof}
	
	\section{Coefficients with small Gaussian component}
	\label{sec-small_gaussian_component}
	In this section we briefly sketch an extension of Theorem \ref{thm:real_gaussian_case} to a more general choice of coefficients. Recall that a
	real-valued 
	random variable $\xi$ satisfies \emph{Cram\'{e}r's condition} if
	\begin{equation}
		\label{eq:cramer_condition}
		\limsup_{t\to\infty} \left|\bE\left[e^{it\xi}\right]\right| < 1.
	\end{equation}
	In particular, (\ref{eq:cramer_condition}) is satisfied when the 
	distribution of $\xi$ possesses 
	an absolutely continuous density, 
	but there are also many singular distributions
	which satisfy (\ref{eq:cramer_condition}). Now, 
	let $\delta_n\in (0,1)$ be any sequence such that 
	$\delta_n \gg (\log n) /n$ and consider the random polynomial
	(analogous to (\ref{eq:definition_of_real_polynomial}))
	\begin{equation}
	  \label{eq:perturbed}
		T_n(x) = \frac{1}{\sqrt{2n+1}} \sum_{j=-n}^{n} \left(\xi_j + \delta_n X_j\right) e^{ijx}
	\end{equation}
	where $\xi_j$ are i.i.d. random variables whose
	%such that 
	common distribution satisfies (\ref{eq:cramer_condition})
	and where $X_j$ are i.i.d. $\mathcal{N}_{\bR} (0,1)$ 
	and independent of the $\xi_j$'s. We will 
	further assume the normalization conditions 
	$\bE[\xi] = 0$ and $\bE[\xi^2] = 1-\delta_n^2$.

To show that Theorem \ref{thm:real_gaussian_case} remains true with 
(\ref{eq:perturbed}) replacing (\ref{eq:definition_of_real_polynomial}),
we follow the same steps as in the proof of 
Theorem \ref{thm:real_gaussian_case}. First,
	%that the minimum modulus of $T_n$ 
%	has a limiting exponential distribution, 
	we need to prove results analogous to Section 
	\ref{sec-point_process_of_near_minima} 
	for the process of near-minima values. This amounts to proving 
	a local limit theorem for the vector 
	$\left(T_n(x_\alpha),T_n^\prime(x_\alpha)\right)\in \bR^4$ 
	for $|x_\alpha|\in [n^{-1+\e}, \pi - n^{-1+\e}]$ 
	(and, for Claim \ref{claim:second_sum_in_seperatin_lemma_is_small}, 
	the vector $\left(T_n(x_{\alpha}),T_n^\prime(x_{\alpha}),T_n(x_{\beta}),T_n^\prime(x_{\beta})\right)\in \bR^8$ for 
	$\alpha\not=\beta$). 
	As the correlations of $T_n$ scale exactly like 
	(\ref{eq:correlations_in_the_real_case}), this local limit result 
	can be extracted from an Edgeworth expansion of order three, 
	see for example \cite[Corollary~20.4]{bha_rao}. 
	The condition (\ref{eq:cramer_condition}) allows us to apply 
	the corresponding Edgeworth expansion. 
	To deal with the intervals $\left\{|x|\leq n^{-1+\e}\right\}$ and 
	$\left\{|x-\pi|\leq n^{-1+\e}\right\}$ 
	one obtains the analogue of Lemma 
	\ref{lemma:minimum_does_not_occur_inside_small_interval_real_coefficients} by using the basic Berry-Esseen inequality, see
	\cite[Lemma~3.3]{konyagin_schlag}. 
	
	It remains to explain how one can prove 
	the results from Section \ref{sec-liggett} for $T_n$ of
	(\ref{eq:perturbed}). 
	Write $$X_j = X_j^{\prime} + \frac{1}{\delta_n n}X_j^{\prime\prime}$$
	where $X_j^\prime,X_j^{\prime\prime}$ are independent mean-zero Gaussians and $\text{Var}(X_j^{\prime\prime}) = 1$. Now, the polynomial $T_n$ splits into three polynomials
	\begin{equation*}
		T_n(x) = \ti{T}_n(x) + R_n(x) + \frac{1}{n} G_n(x),
	\end{equation*}
	where $T_n \stackrel{\mbox{\scriptsize{\rm law}}}{=} \ti{T}_n$, $G_n$ is a Gaussian polynomial given as in (\ref{eq:definition_of_real_polynomial}) independent of $\ti{T}_n$ and
	\begin{equation*}
		R_n(x) = \frac{\delta_n\left(\sqrt{1-1/(\delta_n n)^2}-1\right)}{\sqrt{2n+1}}\sum_{j=-n}^{n} X_j^{\prime} e^{ijx}.
	\end{equation*}
	(Note that $R_n$ is \textit{not} independent of $\ti{T}_n$.) 
	By virtue of the 
	Salem-Zygmund inequality \cite[Chapter~6, Theorem~1]{kahane},
	we get that 
	\begin{align*}
		n \cdot \sup_{x\in \bT} |R(x)| &\ll n \sqrt{\log n} \cdot \delta_n\left(\sqrt{1-1/(\delta_n n)^2}-1\right) 
		%\\ & 
		\ll \frac{\sqrt{\log n}}{n \delta_n} = o(1), 
	\end{align*}
	with probability tending to 1 as $n\to\infty$. 
	Hence, the polynomial $R_n(x)$ does not affect the 
	limiting process of near minima values. Clearly, the independent ``kicks" given by the polynomial $n^{-1}G_n(x)$ correspond to the i.i.d. perturbations of the limiting process and we arrive at the setting of Liggett's theorem. 
	
	\subsection*{Acknowledgments} O. Y.  thanks Alon Nishry for introducing him to this problem, encouraging him to work on it and for many fruitful discussions.
	O. Z. thanks Hoi Nguyen for suggesting this problem 
	at an AIM meeting in August 2019, and 
	Pavel Bleher, Nick Cook and Hoi Nguyen for stimulating 
	discussions at that meeting concerning this problem. 
	In particular, the realization that a linear approximation suffices
	when working on a net with spacing  $o(1/n)$ came out of those discussions. 
	\vspace{1cm}
	\noindent

 	\end{document}